\title[$L^p$-estimates of extensions of holomorphic functions defined on a ] 
{$L^p$-estimates of extensions of holomorphic functions defined on a non-reduced subvariety}
\author{Mats Andersson}  
\thanks{The author was partially supported by grants from the Swedish Research Council.}
\address{Department of Mathematical Sciences, Division of Algebra and Geometry,
Chalmers University of Technology and the  University of Gothenburg,
SE-412 96 G\"{o}teborg, Sweden}
\email{matsa@chalmers.se} %
\newtheorem{thm}{Theorem}[section]
\newtheorem{lma}[thm]{Lemma}
\newtheorem{prop}[thm]{Proposition}
\theoremstyle{definition}
\theoremstyle{remark}
\newtheorem{preremark}[thm]{Remark}
\newtheorem{preex}[thm]{Example}
\newenvironment{remark}{\begin{preremark}}{\qed\end{preremark}}
\newenvironment{ex}{\begin{preex}}{\qed\end{preex}}
\newcommand{\Ker}{{\text{Ker}\,}}
\newcommand{\Ck}{\mathbb{C}}
\newcommand{\C}{\mathbb{C}}
\newcommand{\dbar}{\bar{\partial}}
\newcommand{\A}{\mathscr{A}}
\newcommand{\J}{\mathcal{J}}
\newcommand{\I}{\mathcal{I}}
\newcommand{\E}{\mathscr{E}}
\newcommand{\Ok}{\mathscr{O}}
 \newcommand{\ra}{{\rangle}}
\newcommand{\la}{{\langle}}
\newcommand{\CH}{\mathcal{{ CH}}}
\newcommand{\B}{\mathbb{B}}
\newcommand{\nbh}{neighborhood }
\newcommand{\1}{{\bf 1}}
\newcommand{\w}{\wedge}
\newcommand{\Homs}{{\mathcal Hom}}
\newcommand{\Hom}{{\text{Hom}\,}}
\newcommand{\N}{{\mathcal N}}
\newcommand{\La}{{\mathcal L}}
\newcommand{\psh}{plurisubharmonic }
\numberwithin{equation}{section}
\date{\today}
\begin{document}

\nocite{*}
\bibliographystyle{plain}



\begin{abstract}
Let $D$ be a strictly  pseudoconvex domain in $\C^N$ and $X$ a pure-dimensional
non-reduced subvariety that behaves well at $\partial D$. We 
provide $L^p$-estimates of extensions of holomorphic functions defined on $X$.
\end{abstract}

\maketitle
\thispagestyle{empty}

\section{Introduction}\label{intro}
Let $D$ be a pseudoconvex domain in $\Ck^N$ and let $X$ be a smooth submanifold
of dimension $n$.  For any holomorphic function $\phi$ on $X$ there is a holomorphic
 extension $\Phi$ to $D$.  The celebrated Ohsawa-Takegoshi theorem, \cite{OT},  provides very precise weighted 
 $L^2$-estimates of such extensions. This theorem, and various variants, have played a decisive role
 in complex analysis and algebraic geometry during the last decades, see, e.g., \cite{Oh}. 
 There are also quite recent extension results, see, e.g.,  \cite{CDM} and \cite{De}, 
 obtained by $L^2$-methods,  in certain cases when $X$ is not reduced. 
 
In case $D$ is strictly pseudoconvex 
there are $L^p$- and $H^p$-estimates of extensions from smooth submanifolds,
based on integral representation, see
 \cite{Amar,Cum, HG}. Notably is that if $D$ is strictly pseudoconvex, and $X$ behaves reasonably at $\partial D$,
 then any bounded holomorphic function on $X$ admits a bounded extension. 
 In \cite{AM} there are estimates of extensions from non-smooth hypersurfaces. 
 These results are based on integral formulas for representing the extensions
 or for solving  $\dbar$-equations in $D$.

\smallskip 

Let $i\colon X\to D$ be a 
non-reduced subspace of pure dimension $n$ of a pseudoconvex domain $D$.
That is, we have a coherent ideal sheaf $\J\to D$ of pure dimension $n$ so that
the sheaf $\Ok_X$ of holomorphic functions on $X$, the structure sheaf,
is isomorphic to $\Ok_D /\J$. 
We thus have a natural mapping $i^*\colon \Ok_D \to \Ok_X$,
and we say that $\Phi$ is an extension of a function $\phi$ on $X$,
or that $\Phi$ interpolates $\phi$, if $i^*\Phi=\phi$. 
In \cite{Anorm} we introduced a pointwise coordinate invariant norm $|\phi|_X$ of holomorphic functions $\phi$ 
on $X$.   In this paper we will only consider $X$ such that the underlying
reduced space $Z$, i.e., the zero set of $\J$,  is smooth. In this case 
the norm  $|\phi|_X$ is well-defined on compact subsets up to multiplicative constants.
Recall that a holomorphic differential operator $L$ in $D$ is Noetherian with respect to $\J$ if
$L\Psi$ vanishes on $Z$ as soon as $\Psi$ is in $\J$.  
Such an $L$ induces a mapping $\La\colon\Ok_X\to\Ok_Z$ that we also
call a Noetherian operator. %
In \cite{Anorm} we introduced a locally finitely generated coordinate invariant $\Ok_D$-sheaf
$\N_X$ of Noetherian operators such that $\phi=0$ if and only if $\La\phi=0$
for all $\La$ in $\N_X$. 
We defined the pointwise norm locally as
\begin{equation}\label{pork}
|\phi(z)|_X=\sum_j |\La_j\phi(z)|,
\end{equation}
where $\La_\ell$ is finite set of generators of $\N_X$. For a precise description of $\N_X$,
see Section~\ref{prel}.  Notice that  $|\phi(z)|_X=0$ in an open set if and only if
$\phi$ vanishes identically there. Roughly speaking $|\cdot|_X$ is the smallest invariant norm 
with this property, see Remark~\ref{skola}.

By means of $|\cdot|_X$ we can define $L^p$-norms of $\phi$ in $\Ok_X$.  
It is then natural to look for $L^p$-estimates
of extensions of holomorphic functions on $X$. In this paper we 
present a couple of such results when $D$ is strictly pseudoconvex.  We do not look for  
the most general possible statements but our aim is to point out some new ideas. In order not to
conceal them by technicalities we assume that $X$ behaves well at the boundary of $D$.
Here is our main result.

\begin{thm}\label{thmA}
Let $D\subset\subset \Omega\subset\Ck^N$ be a strictly pseudoconvex domain with smooth boundary, and 
let $i\colon X\to \Omega\subset\C^N$ be a non-reduced subspace of pure dimension $n$ such that
$Z=X_{red}$ is smooth and intersects $\partial D$ transversally.  
Assume that $\Ok_X$ is Cohen-Macaulay
at each point on $Z\cap\partial D$. Let $\kappa=N-n$. Assume that $1\le p<\infty$ and that $r > -1$. 
Let $\delta(z)=dist(z,\partial D)$ be the distance to the boundary.
Each holomorphic function $\phi$ in $\Ok(X\cap D)$ admits a holomorphic extension $\Phi\in\Ok(D)$
such that 
 \begin{equation}\label{trauma}
\int_D \delta^{r} | \Phi|^p dV_D \le C_{r,p}^p \int_{Z\cap D} \delta ^{\kappa+r} |\phi|^p_X dV_Z,
\end{equation}
provided that the right hand side is finite. 
\end{thm}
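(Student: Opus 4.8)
The plan is to prove Theorem~\ref{thmA} by an explicit integral representation of the extension, combining a weighted Koppelman formula for the strictly pseudoconvex domain $D$ with the residue current attached to a free resolution of $\Ok_D/\J$. Concretely, I would fix a free resolution $(E_\bullet,f_\bullet)$ of $\Ok_\Omega/\J$ and let $R=\sum_{p\ge\kappa}R_p$ be the associated residue current, where $R_p$ has bidegree $(0,p)$ and takes values in $E_p$; recall that $R$ annihilates $\J$, so that for holomorphic $\phi$ on $X$ the current $\phi R$ is well defined (independent of the chosen holomorphic representative) and depends only on $i^*$-data, and that the operators generating $\N_X$ are exactly dual to $R$, whence the pointwise comparison $|\phi R|\lesssim|\phi|_X\,dV_Z$ along $Z$. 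Using the Henkin--Ram\'irez support function $s(\zeta,z)$ of $D$ I would build a Cauchy--Fantappi\`e section $g(\zeta,z)$, holomorphic in $z$, together with a Hefer form $H(\zeta,z)$ for $(E_\bullet,f_\bullet)$ and a holomorphic weight $w(\zeta,z)$ adapted to $D$; assembling these gives a kernel $K(\zeta,z)$, holomorphic in $z$, and I would set
\begin{equation*}
\Phi(z)=\int_\zeta \phi(\zeta)\,K(\zeta,z)\w R(\zeta).
\end{equation*}

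Next I would check the two structural properties of $\Phi$. Holomorphy is immediate, since every antiholomorphic differential in $K\w R$ sits in the $\zeta$-variable while the $z$-dependence of $g$, $H$ and $w$ is holomorphic. The interpolation identity $i^*\Phi=\phi$ follows from the residue--Koppelman calculus: the weighted homotopy formula $I=\dbar\mathcal{K}+\mathcal{K}\dbar+\Proj$ reproduces holomorphic data through the projection part, and pairing the diagonal (Bochner--Martinelli) contribution of $K$ with $R$ and then applying the Noetherian operators returns precisely $\phi$, the remaining terms being $\dbar$-exact and hence annihilated after $i^*$. Here the Cohen--Macaulay hypothesis enters: since $\Ok_X$ is Cohen--Macaulay at every point of $Z\cap\partial D$, the projective dimension equals $\kappa$ there, so $R_p\equiv 0$ for $p>\kappa$ in a neighborhood of $\partial D$; thus $R=R_\kappa$ near the boundary and the higher terms $R_p$, $p>\kappa$, are supported in a fixed compact subset of $D$.

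The heart of the matter is the weighted estimate. By the comparison $|\phi R|\lesssim|\phi|_X\,dV_Z$ I reduce \eqref{trauma} to the kernel inequality
\begin{equation*}
|\Phi(z)|\lesssim \int_{Z\cap D}|K(\zeta,z)|\,|\phi(\zeta)|_X\,dV_Z(\zeta),
\end{equation*}
so that it suffices to show that the integral operator with positive kernel $|K(\zeta,z)|$ is bounded from $L^p(Z\cap D,\delta^{\kappa+r}dV_Z)$ to $L^p(D,\delta^{r}dV_D)$. I would do this by a Schur-type test, i.e.\ by exhibiting an auxiliary weight and bounding the two one-variable integrals of $|K|$ against suitable powers of $\delta$; these in turn follow from the standard size estimates for the Henkin--Ram\'irez kernel on a strictly pseudoconvex domain, with the transversality of $Z$ and $\partial D$ guaranteeing that $|s(\zeta,z)|$ and $\delta$ behave along $Z$ as in the ambient estimates. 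Two features produce the exponents in \eqref{trauma}: integrating $\zeta$ over the $n$-dimensional $Z$ rather than over $D$ leaves $\kappa$ transversal directions uncontracted, which is exactly what creates the shift from $\delta^{r}$ to $\delta^{\kappa+r}$; and the transversal integral converges precisely because $r>-1$. The higher residue terms $R_p$ with $p>\kappa$, being supported away from $\partial D$, contribute only an interior piece on which $\delta\simeq 1$, trivially dominated by the right-hand side of \eqref{trauma}.

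The main obstacle is the boundary kernel analysis: controlling $|K(\zeta,z)|$ uniformly up to $Z\cap\partial D$, where the Cauchy--Fantappi\`e singularity and the degeneration $\delta\to0$ interact, and tracking the precise powers of $\delta$ so that the Schur bounds close with a constant depending only on $r$ and $p$. This is where strict pseudoconvexity, the transversal intersection, and the Cohen--Macaulay reduction to $R=R_\kappa$ near $\partial D$ are all used together; once the kernel estimate is in place the rest is routine.
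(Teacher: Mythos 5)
Your outline reproduces the paper's strategy for the case of the ball (Section~\ref{ball}), but for a general strictly pseudoconvex $D$ it has a genuine gap at its core: the reduction of \eqref{trauma} to a positive-kernel inequality $|\Phi(z)|\lesssim\int_{Z\cap D}|K(\zeta,z)|\,|\phi|_X\,dV_Z$ via a claimed ``pointwise comparison $|\phi R|\lesssim|\phi|_X\,dV_Z$''. In the non-reduced case $R$ is not a current of order zero along $Z$: locally its components have the form $\gamma\,\dbar(d\tau/\tau^{M+\1})\w d\zeta$, so the pairing $\int\phi\,K\w R$ forces transverse holomorphic derivatives $\partial_\tau^m$, $m\le M$, onto the kernel $K$ and not only onto $\phi$; no Schur test on $|K|$ alone can therefore be run. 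Worse, with your choice of kernel built from the Henkin--Ram\'irez support function $s(\zeta,z)$ -- which is holomorphic in $z$ but \emph{not} anti-holomorphic in $\zeta$ -- the derivatives $\partial_\tau^m$ hitting $1/s^{n+\alpha+1}$ raise the singularity to $1/|s|^{n+\alpha+1+|m|}$ with no compensating gain, and the Schur estimates (the analogues of \eqref{polo1}--\eqref{polo3}) then close only with a loss of powers of $\delta$, so the exponent $\kappa+r$ in \eqref{trauma} is not attained. The paper avoids exactly this by constructing $v$ with $\partial_\zeta v=\Ok(|\zeta-z|^\infty)$ (an almost anti-holomorphic extension of $-\rho$), so that transverse holomorphic derivatives essentially never hit $1/v^{n+\alpha+1}$, and by an integration-by-parts argument (the operators $T$, $T'$ in Section~\ref{ball}) that moves the derivatives falling on $(-\rho)^\alpha$ back onto the data, where they are controlled by $|\phi|_X$ through \eqref{tomat2} and \eqref{tomat4}.

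This exposes the second problem your proposal passes over: the two requirements on the kernel -- holomorphic dependence on $z$ (so that ``holomorphy of $\Phi$ is immediate'') and anti-holomorphy in $\zeta$ (so that the estimate works) -- are compatible only for the ball and special domains as in Remark~\ref{laban}. For general strictly pseudoconvex $D$ the paper's $v$ is only smooth in $z$ away from the diagonal, the naive formula \eqref{ponke} produces a non-holomorphic interpolant, and the paper must add the Kerzman--Stein/Ligocka-type correction $\A=T\big(HR\w\dbar_t g^\alpha\big)$; this in turn requires the genuinely new Theorem~\ref{CC}, a linear $\dbar$-solution operator mapping $\E^{0,1}\J$ into $\E^{0,0}\J$, proved via the dimension principle for pseudomeromorphic currents. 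Your proposal has no substitute for this step, and it cannot be bypassed. (A smaller inaccuracy: Cohen--Macaulayness of $\Ok_X$ on $Z\cap\partial D$ does not force $R_k=0$ for $k>\kappa$ near $\partial D$ for a given global resolution; what the paper actually uses is the representation $R_k\w d\zeta=a_k\mu$ of \eqref{pjosk1}, with $a_k$ smooth near the Cohen--Macaulay points.)
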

 
 Here $dV_D$ and $dV_Z$ denote some volume forms on $D$ and $Z$, respectively.
 Since $X$ is defined in $\Omega$,  the $L^p$-norms are well-defined up to multiplicative constants.

\smallskip
The transversally condition means that if $\rho$ is a defining function for $D$ and $(\zeta,\eta)$ are local
coordinates such that $Z=\{\eta=0 \}$,
then $\partial\rho\w d\eta_1\w\ldots \w d\eta_\kappa$ is non-vanishing
on $\partial D\cap Z$.  In particular,  $D\cap Z$ is a strictly pseudoconvex domain in $Z$ with smooth
boundary.

\smallskip
Assume that $D\subset \C_{\zeta,\tau}^{n+\kappa}$ is the unit ball, $Z=\{\tau=0\}$  and  $X=Z$ is reduced.
If $\phi(\zeta)$ is holomorphic on $Z\cap D$ and $\Phi(\zeta,\eta)=\phi(\zeta)$ is the trivial extension to 
the entire ball, and $\delta(\zeta,\tau)=1-|\zeta|^2-|\tau|^2$, then  
$$
\int_D \delta^r |\Phi|^pdV_D=c_{r,\kappa}\int_{Z\cap D}\delta^{r+\kappa}|\phi|^pdV_Z,
$$
where
$c_{r,\kappa}=\pi^\kappa/(r+1)\cdots (r+\kappa)$.
It follows that the estimate \eqref{trauma} is sharp up to the constant $C_{r,p}$ when $X$ is reduced.  
In the non-reduced case it is not, as we will see in our second result.

Assume that  $Z$ is a smooth hypersurface in $\Omega$ defined by the function $f$ in $\Omega$,
i.e., $Z=Z(f)$ and $df\neq 0$ on $Z$, 
let $\J=\la f^{M+1}\ra$ and let $\Ok_X=\Ok_\Omega/\J$.    It turns out that then $\N_X$ is generated
by all differential operators of order at most $M$, so that  
 $$
 |\phi(z)|_X=\sum_{k=0}^M \sum_{|\beta|=k}\big|\partial^\beta\phi|_z\big|, \quad z\in Z.
$$

\begin{thm}\label{thmB}
Let $D\subset\subset \Omega$ be as in Theorem~\ref{thmA}. Assume that $Z$ is a 
smooth hypersurface in $\Omega$ that intersect $\partial D$ transversally. Assume that
that $Z$ is defined by the function $f$ and let $\Ok_X=\Ok_D/\la f^{M+1}\ra$.
Morever, assume that $1\le p<\infty$,  $r>-1$, and let $\delta$ be the distance to the boundary.
Each function $\phi$ on $\Ok(D\cap X)$ has an extension $\Phi\in \Ok(D)$ such that 
$$
\int_D \delta^r |\Phi|^pdV_D\le C_{r,p}^p \sum_{k=0}^M\int_{Z\cap D} \delta^{r+1+k/2}
\sum_{|\beta|=k} |\partial^\beta \phi|^pdV_Z,
$$
provided that the right hand side is finite.
\end{thm}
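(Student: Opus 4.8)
The plan is to build the extension one normal order at a time, exploiting that by transversality the defining function $f$ is complex tangential to $\partial D$, so that each power of $f$ costs only half a power of $\delta$.

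First I would reduce to a neighborhood of $Z\cap\partial D$: away from $\partial D$ the weights $\delta^r$ and $\delta^{r+1+k/2}$ are bounded and comparable and the problem is the interior extension problem. Using $df\neq 0$ and the transversality $\partial\rho\w df\neq0$, I choose near each boundary point holomorphic coordinates $(\zeta,w)$ with $Z=\{w=0\}$ and $w$ comparable to $f$; then the fibers $\{\zeta=\text{const}\}$ are complex tangential to $\partial D$ along $Z\cap\partial D$, and $D\cap Z$ is strictly pseudoconvex in $Z$. I decompose $\phi\in\Ok(X\cap D)$ into its normal jet $a_0,\dots,a_M\in\Ok(Z\cap D)$, where $a_j=\tfrac1{j!}\partial_w^j\phi|_Z$ (invariantly, $a_j$ is the symbol of $\phi$ in the $j$-th power of the conormal bundle, trivialized by $df$). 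Since $\partial_w$ has order one, $|a_j|\lesssim\sum_{|\beta|\le j}|\partial^\beta\phi|$ on $Z$, so $\delta^{r+1+j/2}|a_j|^p$ is dominated by the right-hand side of the asserted inequality (lower orders carry larger powers of $\delta$). Interpolating the full jet requires a finite recursion in $j$, at each step extending a function on $Z\cap D$ assembled from $a_0,\dots,a_j$; as this only feeds in lower normal order, it is harmless for the weights. Modulo this bookkeeping the theorem reduces to the single estimate, for the extension operator $E$ below and any $b\in\Ok(Z\cap D)$,
\begin{equation*}
\int_D\delta^r|f|^{jp}|Eb|^p\,dV_D\le C\int_{Z\cap D}\delta^{r+1+j/2}|b|^p\,dV_Z,\qquad j=0,\dots,M.
\end{equation*}

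For the building block I would take $Eb$ to be the linear Henkin--Cumenge extension underlying Theorem~\ref{thmA} applied to the \emph{reduced} hypersurface $Z$ (so $\kappa=1$): it is independent of the weight and satisfies $\int_D\delta^s|Eb|^p\,dV_D\le C_s\int_{Z\cap D}\delta^{s+1}|b|^p\,dV_Z$ for every $s>-1$, and in the flat model it agrees to leading order with the trivial extension that is constant along the fibers $\{\zeta=\text{const}\}$. I would prove the displayed estimate by splitting $D$ at $|f|\sim\delta^{1/2}$. On $\{|f|\le\delta^{1/2}\}$ one has $|f|^{jp}\le\delta^{jp/2}$, so the integral is at most $\int_D\delta^{r+jp/2}|Eb|^p$, which by the reduced estimate with $s=r+jp/2$ is $\lesssim\int_{Z\cap D}\delta^{r+jp/2+1}|b|^p\le\int_{Z\cap D}\delta^{r+j/2+1}|b|^p$, since $\delta\le 1$ and $p\ge1$. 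This already yields the claimed weight, in fact with the better exponent $jp/2$; this matches the model computation after Theorem~\ref{thmA} and explains why the bound is sharp only for $p=1$.

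The main obstacle is the complementary region $\{|f|>\delta^{1/2}\}$, where $|f|^{jp}$ is large and the crude bound $|f|^{jp}\le C$ returns only the unimproved weight $\delta^{r+1}$ of Theorem~\ref{thmA}. Here holomorphicity alone does not suffice --- a sub-mean value estimate over the anisotropic balls of $D$ merely reproduces the same integral --- so the gain must come from the fine structure of $E$. The point is that, $f$ being complex tangential, moving in $f$ is anisotropically cheap (it scales like $\delta^{1/2}$), and $Eb$ is, to leading order, constant along the tangential fibers swept out by $f$; integrating the weight $|f|^{jp}\delta^r$ along such a fiber, whose $f$-extent is $\sim\delta^{1/2}$, produces the factor $\delta^{jp/2}$ and collapses the fiber integral to a boundary integral over $Z\cap D$ with weight $\delta^{r+1+jp/2}\le\delta^{r+1+j/2}$, using that the restriction of $\delta$ to $Z$ is comparable to the distance to $\partial(Z\cap D)$. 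Making the tangential near-constancy of $Eb$ quantitative is the technical heart: it rests on the anisotropic (ball--box) estimates for the Henkin kernel of the strictly pseudoconvex domain $D$. Finally I would sum the resulting estimates over $j=0,\dots,M$ and recombine the local coordinate patches through the single global operator $E$, which gives the theorem.
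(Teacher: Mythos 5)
There is a genuine gap, and you flag it yourself: the estimate on the region $\{|f|>\delta^{1/2}\}$ is never proved, only described as resting on a "tangential near-constancy" of $E$ that you defer to unspecified anisotropic kernel estimates. This is not a routine technicality — it is the entire content of the theorem. The black-box use of the reduced-case estimate $\int_D\delta^s|Eb|^p\lesssim\int_{Z\cap D}\delta^{s+1}|b|^p$ cannot be repaired after the fact, because once you have passed to the operator-norm inequality you have discarded the localization of the kernel, and the factor $|f(z)|^{jp}$ is simply large on the bad region. The fix forces you back inside the integral: writing $Eb(z)=\int_{Z\cap D}K(\zeta,z)b(\zeta)$, the quantitative version of your "near-constancy along the fibers swept out by $f$" is the pointwise inequality, valid for $\zeta\in Z$ on the support of the kernel,
$$
|f(z)|=|f(z)-f(\zeta)|=\Ok(|z-\zeta|)\lesssim |v(\zeta,z)|^{1/2},
$$
which follows from the Koranyi geometry \eqref{pontus}. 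This is exactly how the paper proceeds: the factor $f(z)^k$ multiplying the $k$-th derivative term is absorbed into the kernel, reducing its singularity from $|v|^{-(n+\alpha+1)}$ to $|v|^{-(n+\alpha+1-k/2)}$, and then the standard estimates of Lemma~\ref{uppsk} produce the weight $\delta^{r+1+k/2}$ with no splitting of the domain at all. So your proposal circles the key point without landing it; until this inequality (or an equivalent quantitative lemma) is stated and used inside the kernel, the proof is incomplete.

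A secondary gap is the jet recursion. The paper never performs it: the Hefer factorization $H=\sum_{k}f(\zeta)^{M-k}f(z)^k h$ gives $HR=\sum_k f(z)^k\, h\w\dbar(1/f^{k+1})$, so the division-interpolation formula automatically produces a single holomorphic extension in which the factors $f(z)^k$ and the $k$-th order derivatives of $\phi$ appear already correctly paired, with interpolation guaranteed by Proposition~\ref{stekpanna} and the weight machinery of Sections~\ref{kodiak2}--\ref{pokemon}. In your scheme, by contrast, the corrections $\tilde a_j$ involve normal derivatives $\partial_w^{j-i}E(\tilde a_i)|_Z$ of lower-order extensions, and the claim that these are "harmless for the weights" requires anisotropic Cauchy estimates for the extension operator (a half-power loss of $\delta$ per $w$-derivative) that you do not supply; a crude full-power loss per derivative would destroy the exponent $r+1+j/2$. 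Both gaps are fixable, but fixing them amounts to reconstructing the paper's kernel-level argument, which is why the paper's proof of Theorem~\ref{thmB} occupies only a page given the machinery of Sections~\ref{ball} and \ref{ball2}.
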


Thus the requirement is less restrictive for higher derivatives of $\phi$.

\smallskip

The extension $\Phi$ in Theorems~\ref{thmA} and \ref{thmB} 
is obtained by an integral formula, that in turn is constructed by means
of the residue currents in \cite{AW1} and the division-interpolation formulas in
\cite{Aint2}.  A main novelty is the technique to carry out the estimates 
in terms of the norm in \cite{Anorm}.    

When $D$ is a ball the extension formula is explicitly given in terms of the
residue current associated with $X$.  In the general case the analogously constructed
formula does not provide a holomorphic extension, so it has to be slightly modified by a technique
inspired by  a classical idea of Kerzman-Stein and Ligocka, see, e.g.,  \cite{Range}. 
To this end we have to construct a 
linear solution operator for the $\dbar$-equation for $\dbar$-closed
smooth $(0,1)$-forms in $\E\J$ for a quite arbitrary ideal sheaf $\J$, Theorem~\ref{CC}.

In Section~\ref{prel} we recall the definition of the norm $|\cdot|_X$, and in Section~\ref{exsec}
we give some examples of computations of the norm $|\cdot |_X$  and applications
of Theorem~\ref{thmA}.
In Sections~\ref{kodiak} 
to \ref{pokemon} we recall  
the residue currents associated with $X$, and we make the construction of
interpolation-division formulas in strictly pseudoconvex domains.
The remaining sections are devoted to the proofs of  Theorem~\ref{thmA},
Theorem~\ref{thmB}, and Theorem~\ref{CC}.

\smallskip
\noindent {\bf Acknowledgement}
The author would like to thank H\aa kan Samuelsson Kalm for valuable discussion
on various questions in this paper, and the referee for careful reading and important comments.

\section{The pointwise norm on $X$}\label{prel}
Let $\Omega\subset\C^N$ be an open pseudoconvex domain, let $Z$ be a submanifold of dimension $n<N$, and
let $\kappa=N-n$.  
The $\Ok_\Omega$-sheaf of Coleff-Herrera currents, $\CH_\Omega^Z$, introduced by  Bj\"ork, see \cite{BjAbel},
is the set of  $\dbar$-closed $(N,\kappa)$-currents $\mu$ in $\Omega$ with support on $Z$ 
such that $\bar h\mu=0$ for all holomorphic  functions $h$ that vanishes on $Z$. 
It is well-known that  $\CH_\Omega^Z$ is coherent.  
 Notice that if $\J\subset\Ok_\Omega$ is 
 an ideal sheaf with zero set $Z$, then  
$\Homs(\Ok_\Omega/\J,\CH_\Omega^Z)$ is the subsheaf of
$\mu$ in  $\CH_\Omega^Z$ that are annihilated by $\J$.  

\begin{remark} If $Z$ is not smooth, then $\CH_\Omega^Z$ is  defined in the same way, but one must impose
an additional regularity condition at $Z_{sing}$,  see, \cite{BjAbel} or, e.g., \cite[Section~2.1]{AL}.
\end{remark}

Consider the embedding $i\colon X\to \Omega\subset\C^N$. 
Locally, in say $U\subset\Omega$, we have coordinates
$(\zeta,\tau)=(\zeta_1,\ldots, \zeta_n, \tau_1,\ldots, \tau_\kappa)$ 
so that $Z\cap U=\{ \tau=0\}$. Then the mapping $\pi\colon U\to Z\cap U$,
$(\zeta,\tau)\mapsto \zeta$ is a submersion, and locally any submersion appears in this way.
%
If $\mu$ is a section of $\Homs(\Ok_\Omega/\J,\CH_\Omega^Z)$ in $U$ and $\phi$ is a holomorphic
function, then $\pi_*(\phi\mu)$ is a holomorphic $(0,n)$-form on $Z\cap U$ that only depends
on the image $i^*\phi$ of $\phi$ in $\Ok(X\cap U)$.   If  
$d\zeta=d\zeta_1\w\ldots\w d\zeta_n$ thus
\begin{equation}\label{tomat}
\pi_*(\phi\mu)= \La\phi \, d\zeta
\end{equation}
defines a holomorphic differential operator, a Noetherian operator,  $\La\colon \Ok(X\cap U)\to \Ok(Z\cap U)$,
cf.~Section~\ref{intro}.  
Following \cite{Anorm} we define $\N_X$ as the set all such local operators $\La$ obtained from 
some  $\mu$ in 
$\Homs(\Ok_\Omega/\J,\CH_\Omega^Z)$ and local submersion.
It follows from \eqref{tomat} that if $\xi$ is in $\Ok_Z$, then $\xi \La\phi=\La(\pi^*\xi \phi)$.
Thus $\N_X$ is a left $\Ok_Z$-module. It is in fact coherent, in particular it is locally finitely generated.   
Recall that our norm $|\cdot|_X$ is defined by \eqref{pork}.

 \section{Examples} \label{exsec}
We now give some examples 
how to compute the Noetherian operators $\La$, the norm $|\cdot |_X$, and
where Theorem~\ref{thmA} is applicable.
We keep the notation from the previous section.

\begin{ex}\label{simple1}
Assume that we have an embedding and local coordinates $(\zeta,\tau)$ as above in $U\subset \Omega$.
Let  $M=(M_1,\ldots,M_\kappa)$ be a tuple of non-negative integers and consider the ideal 
sheaf
$$
\I=\left< \tau_1^{M_1+1},\ldots, \tau_\kappa^{M_{\kappa}+1}\right>.
$$
Let $\hat X$ be the analytic space with structure sheaf $\Ok_{\hat X}=\Ok_\Omega/ \I$.
For a multiindex $m=(m_1,\ldots, m_\kappa)$, 
$m\le M$ means that $m_j\le M_j$ for $j=1,\ldots, \kappa$.
Any $\psi$ in $\Ok_{\hat X}$ has a unique representative in $\Omega$ of the form
\begin{equation}\label{simple0}
\psi=\sum_{m\le M} \hat\psi_m(\zeta) \otimes\tau^m.
\end{equation}
The tensor product of currents
\begin{equation}\label{hatmu}
\hat\mu=\dbar\frac{d\tau_1}{\tau_1^{M_1+1}}\w\ldots\w\dbar\frac{d\tau_\kappa}{\tau_\kappa^{M_\kappa+1}} \w d\zeta
=:\dbar\frac{d\tau}{\tau^{M+\1}}\w d\zeta,
\end{equation}
where $d\tau_j/\tau_j^{M_j+1}$ are principal value currents, 
is a Coleff-Herrera current in $U$.  
If $\varphi = \varphi_0(\zeta,\tau) d\bar{\zeta}$ is a test form, then
\begin{equation}\label{hatmuintegral}
\hat\mu.\varphi=\dbar\frac{d\tau}{\tau^{M+\1}} \w d\zeta.\varphi
=  \frac{(2\pi i)^\kappa}{M!} \int_{\zeta} \frac{\partial^{|M|} \varphi_0}{\partial \tau^M} (\zeta,0) d\zeta \wedge d\bar{\zeta},
\end{equation}
where $M!=M_1!\cdots M_\kappa!$.
In particular, $\tau_j^{M_j+1}\hat\mu=0$ for each $j$ and thus $\hat\mu$ is in
the $\Ok_\Omega$-module (and $\Ok_{\hat X}$-module)
$\Homs(\Ok/\I,\CH_\Omega^Z)$.  It is in fact a generator for this module, see, e.g.,
\cite[Theorem~4.1]{ACH}.
If $\pi$ is the simple projection $(\zeta,\tau)\mapsto \zeta$ and $\psi$ is holomorphic, then
it follows from \eqref{hatmuintegral} that 
$$
\pi_*(\psi\hat\mu) =  \frac{(2\pi i)^\kappa}{M!} \frac{\partial^{|M|} \psi}{\partial \tau^M} (\zeta,0) d\zeta
$$
and hence, cf.~\eqref{tomat}, 
$$
\La\psi= \frac{(2\pi i)^\kappa}{M!} \frac{\partial^{|M|} (\psi\gamma)}{\partial \tau^M} (\zeta,0).
$$
For a general projection $\pi$, its associated Noetherian operator $\La$  will involve derivatives with respect to $\zeta$
as well, cf.~\cite[Section~2]{Anorm}.  

One can check that the set  $\mu_\alpha:=\tau^\alpha\hat\mu$, $\alpha\le M$, generates 
the $\Ok_Z$-module $\N_{\hat X}$ in $U$. 
If $\Psi(\zeta,\tau)$ is any representative in $U$ for $\psi$ in $\Ok_{X'}$, then it follows from
\cite[Proposition~4.6]{Anorm}, with $a_k=1$, cf.~\cite[(4.22)]{Anorm}, that
\begin{equation}\label{tomat2}
|\psi|_{\hat X}\sim \sum_{m\le M, \ |\alpha|\le |M-m|}
\Big|\frac{\partial}{\partial \tau^m \partial \zeta^\alpha}\Psi(\zeta,0)\Big|.
%
\end{equation}
\end{ex}

Possibly after shrinking $\Omega$ somewhat, there is a finite number 
$\mu_1,\ldots, \mu_\rho$ of sections that generate  the $\Ok_\Omega$-module $\Homs(\Ok_\Omega/\J,\CH_\Omega^Z)$ 
in $\Omega$. 
Locally, in $U\subset \Omega$, we can choose $M$ in Example~\ref{simple1} so that 
$\I\subset \J$ in $U$.  
Since $\hat\mu$ generates $\Homs(\Ok_\Omega/\I,\CH_\Omega^Z)\supset \Homs(\Ok_\Omega/\J,\CH_\Omega^Z)$,
there are holomorphic functions 
$\gamma_1, \ldots, \gamma_\rho$  
such that
\begin{equation}\label{tomat3}
\mu_j=\gamma_j \hat\mu, \quad j=1,\ldots,\rho
\end{equation}
in $U$.
It follows from \cite[(4.22)]{Anorm} that 
\begin{equation}\label{tomat4}
|\phi|_X\sim \sum_{j=1}^\rho  |\gamma_j \phi|_{\hat X}.
\end{equation}

\begin{ex}
Let $f_1,\ldots,f_\kappa$ be holomorphic functions in a \nbh $\Omega$ of the (closure of the) unit ball $D$ 
in $\Ck^N$ such that 
$df_1\w \ldots \w df_\kappa\neq 0$ on their common zero set $Z$. Moreover,
assume that $Z$ intersect $\partial D$
transversally. Let $\J=\la f_1^{M_1+1}, \ldots, f_\kappa^{M_\kappa+1}\ra$ and let $X$ be the non-reduced space
associated with $\Ok_\Omega/\J$.  Then Theorem~\ref{thmA} applies to $X$ and $D$. 

At a given point  $p\in Z\cap\partial D$  we can assume, possibly after reordering, that  the coordinates in $\Ck^N$ 
are $(z,w)=(z_1,\ldots, z_n, w_1,\ldots,w_\kappa)$ and that 
\begin{equation}\label{pojk}
df_1\w \ldots \w df_\kappa\w dz_1\w \ldots \w dz_n\neq 0
\end{equation}
 at $p$. In a \nbh $U$ of   $p$,
therefore $\zeta=z, \ \tau=f$ are local coordinates.    The assumption \eqref{pojk} means that
the matrix $B=\partial f/\partial w$ is invertible, and we have
\begin{equation}\label{plupp}
\frac{\partial}{\partial\tau}= B^{-1}\frac{\partial}{\partial w},
\quad 
\frac{\partial}{\partial\zeta}=\frac{\partial}{\partial z}-\frac{\partial f}{\partial z}B^{-1} \frac{\partial}{\partial w}.
\end{equation}
If $\phi$ is a function on $X$, then $|\phi|_X$ is given by \eqref{tomat2},  
which can be expressed in terms of the original coordinates $(z,w)$ by \eqref{plupp}.
\end{ex}

\begin{remark}[The pointwise norm when $X$ is Cohen-Macaulay]\label{skola}
Assume that we have a local coordinates $(\zeta,\tau)$ in $U\subset \Omega$ as above.
Assume furthermore that $\Ok_X$ is Cohen-Macaulay.
Then one can find monomials $1, \tau^{\alpha_1}, \ldots,\tau^{\alpha_{\nu-1}}$
such that each $\phi$ in $\Ok_X$ has a  unique representative
\begin{equation}\label{skata1}
    \hat \phi=\hat{\phi}_0(\zeta)\otimes 1 +\cdots +\hat{\phi}_{\nu-1}(\zeta)\otimes \tau^{\alpha_{\nu-1}},
\end{equation}
where $\hat{\phi}_j$  are in $\Ok_Z$, see, e.g., \cite[Corollary~3.3]{AL}, cf.~\eqref{simple0}.
In this way $\Ok_X$
becomes  a free $\Ok_Z$-module.  By \cite[Theorem~4.1 (iii)]{Anorm}
 $|\cdot|_X$ is the smallest norm such that
\begin{equation}\label{tomat5}
\sum|\hat\phi_j(\zeta)|\le C |\phi(\zeta)|_X,
\end{equation}
for any choice of local coordinates and monomial basis. 
\end{remark}

We now consider a space $X$ with a non-Cohen-Macaulay point,
see~\cite[Section~5]{Anorm}.

\begin{ex}\label{noncm}
Consider the $2$-plane $Z=\{w_1=w_2=0\}$ in $\Omega=\C^4_{z_1,z_2,w_1,w_2}$,
let
$$
\J=\langle w_1^2,w_2^2, w_1w_2, w_1z_2-w_2z_1\rangle.
$$
Then the associated non-reduced space $X$  has pure dimension $2$ and is Cohen-Macaulay except at the point $0$,
see, \cite[Example~6.9]{AL}. It is also shown there that $\Homs_\Ok(\Ok/\J,\CH^Z_\Omega)$ is generated by
$$
\mu_1=\dbar\frac{dw_1}{w_1}\w\dbar\frac{dw_2}{w_2}\w dz_1\w dz_2,\quad
\mu_2=(z_1w_2+z_2w_1)\dbar\frac{dw_1}{w_1^2}\w\dbar\frac{dw_2}{w_2^2}\w dz_1\w dz_2.
$$
It turns out, see \cite[Section~5]{Anorm},
that the left $\Ok_Z$-module $\N_X$ is generated by 
\begin{equation}\label{struma}
1, \ z_2\frac{\partial}{\partial z_1}, \ z_1\frac{\partial}{\partial z_1},\
z_2\frac{\partial}{\partial z_2}, \ z_1\frac{\partial}{\partial z_2}, \
z_1\frac{\partial}{\partial w_1}+z_2\frac{\partial}{\partial w_2}.
\end{equation}
Thus we get, cf.~\eqref{pork},  
\begin{equation}\label{strumpa}
|\phi|_X^2 = |\phi|^2 +|z|^2\big|\frac{\partial\phi}{\partial z_1}\big|^2+|z|^2\big|\frac{\partial\phi}{\partial z_2}\big|^2
+\big|z_1\frac{\partial\phi}{\partial w_1}+z_2\frac{\partial\phi}{\partial w_2}\big|^2.
\end{equation}
 \end{ex}

\begin{ex}
Let $X$ be the space in Example~\ref{noncm} and let $D$ be the unit ball in $\Ck^4$.  Each holomorphic function in
$\phi$ in $\Ok(X\cap D)$ can be written 
\begin{equation}\label{strumpa1}
\phi=\phi_0(z)\otimes 1+ \phi_1(z)\otimes w_1+\phi_2(z)\otimes w_2,
\end{equation}
where $\phi_j$ are holomorphic functions in the unit ball $\B\subset \Ck^2_z$.  One can check that 
\begin{equation}\label{strumpa3}
 h(z)=z_1\phi_1(z)+z_2\phi_2(z).
\end{equation}
is independent of the representation \eqref{strumpa1}. In this way we get a one-to-one correspondence between
functions in $\Ok(X\cap D)$ and pairs $\phi_0, h$ of holomorphic functions in $\B$ such that $h(0)=0$.
Notice that the
rightmost term in \eqref{strumpa} is precisely $|h|^2$.
If now $\phi$ is in $L^p(\B,\delta^r)$, then 
$h$ is in $L^p(\B,\delta^r)$, and $h(0)=0$. It is well-known that one can find 
$\phi_1,\phi_2$ in $L^p(\B,\delta^r)$ such that \eqref{strumpa3} holds.
With such choices, \eqref{strumpa1} defines an extension $\Phi$ of $\phi$ to the entire ball $D$
that satisfies the estimate 
\eqref{trauma} in Theorem~\ref{thmA}, cf.~the discussion after that theorem.

If we deform the embedding of $X$ in $D$ slightly, or replace the unit ball by
a more general strictly pseudoconvex set $D$,  so that the non-Cohen-Macaulay point is still in the interior
of $D$, then Theorem~\ref{thmA} provides a non-trivial extension.  
If this point lies on $\partial D$, then Theorem~\ref{thmA} is not applicable.
\end{ex}

\section{Residue currents associated with a free resolution}\label{kodiak}
If $\J$ is a coherent ideal sheaf in $\Omega$, then we can find a free resolution 
\begin{equation}\label{free}
0\to \Ok(E_{\nu})\stackrel{f_{\nu}}{\to} \Ok(E_{\nu-1})\cdots \stackrel{f_1}{\to} \Ok\to \Ok/\J\to 0
\end{equation}
of $\Ok/\J$ in a slightly smaller pseudoconvex domain that we for simplicity denote by $\Omega$ as well. 
If the (trivial) vector bundles $E_k$ are equipped with Hermitian
metrics we say that \eqref{free} is a hermitian resolution.  For each Hermitian resolution
there are,  \cite{AW1}, associated residue currents 
$$
R=\sum_{k=\kappa}^\nu R_k, \quad   U=\sum_{\ell,k} U_k^\ell,
$$
where $R_k$ are  currents of bidegree $(0,k)$ with support on $Z:=Z(\J)$ that take values in $\Hom(E_0, E_k)\simeq E_k$,
and $U_k^\ell$ are $(0,k-\ell)$-currents that are smooth outside $Z$
and take values in $\Hom(E_\ell,E_k)$. 

\begin{remark}\label{kompass}
The currents $R$ and $U$ are defined even if \eqref{free} is just a pointwise generically exact complex.
In general then $R$ has components $R_k^\ell$ with values in $\Hom(E_\ell,E_k)$ even for $\ell\ge 1$.
\end{remark}

If $\J$ is Cohen-Macaulay, then
one can choose \eqref{free} so that $\nu=\kappa$. In that case the components of $R=R_{\kappa}$
are in $\Homs(\Ok_\Omega/\J,\CH_\Omega^Z)$.   If we only assume that $\Ok_X$ has pure
dimension, then we may have components $R_k$ for $k\le N-1$, see, e.g., \cite{AS,AL}.  They 
can be written,  \cite[Lemma~6.2]{AL}, 
\begin{equation}\label{pjosk1}
R_k\w d\zeta_1\w\ldots\w d\zeta_{N}=a_k\mu, 
\end{equation}
where $\mu$ is in $\Homs(\Ok_\Omega/\J,\CH_\Omega^Z)$
with values in a trivial bundle $F$ and $a_k$ are currents in $\Omega$ that take values in $\Hom(F,E_k)$.
Moreover, $a_k$ are smooth outside the Zariski closed set $W\subset \Omega$ of non-Cohen-Macaulay points on $Z$,
which has positive codimension on $Z$. 
The currents $a_k$ are {\it almost semi-meromorphic}  in the terminology from
\cite{AS, AW3}.  For us the important point is that  
\begin{equation}\label{pjosk2}
a_k\mu =\lim_{\epsilon\to 0}\chi(|f|^2/\epsilon) a_k \mu, 
\end{equation}
if $f$ is a holomorphic tuple with zero set $W$ and $\chi$ is a smooth function on $[0,1)$ that is 
$1$ for $t<1/2$ and $0$ for $t>1$.  Notice that for each $\epsilon>0$, 
$\chi(|f|^2/\epsilon) a_k \mu$ is the product of a current and a smooth form.

\smallskip
We have the following duality results.

\begin{prop}\label{stekpanna}
If $\Phi$ is holomorphic, then it is in the ideal $\J$ if and only if
$R\Phi=0$.
If $\Ok_X$ has pure dimension, then $\Phi$ is in $\J$ if and only if
$\mu\Phi=0$ for all $\mu$ in $\Homs(\Ok_\Omega/\J,\CH_\Omega^Z)$.
\end{prop}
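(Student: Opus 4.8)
The plan is to treat the two equivalences separately, reducing the second to the first.

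For the first equivalence I would invoke the duality theorem for the residue current of a free resolution from \cite{AW1}. The inclusion $\J\subseteq\ann R$, i.e.\ $R\Phi=0$ whenever $\Phi\in\J$, is the elementary direction and is part of the construction of $R$ in \cite{AW1}. For the converse I would use the comparison formula $\nabla_f U=I-R$, where $\nabla_f=f-\dbar$ and $f=\sum f_k$. Since $\Phi$ is a holomorphic section of $E_0=\Ok$ we have $\nabla_f\Phi=0$, so if $R\Phi=0$ then $\nabla_f(U\Phi)=\Phi$; extracting the $E_0$-component gives the current identity $\Phi=f_1(U_1\Phi)$, while the lower components read $\dbar(U_1\Phi)=f_2(U_2\Phi)$, and so on up the complex. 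The genuinely nontrivial step is to promote this chain of current solutions to a holomorphic one: using exactness of the sheaf complex \eqref{free} together with solvability of $\dbar$ on the pseudoconvex $\Omega$, one successively replaces the currents $U_k\Phi$ by holomorphic sections, ending with a holomorphic $w$ satisfying $f_1 w=\Phi$, so $\Phi\in\J$. I regard this upgrade as the heart of the first statement; it is carried out in \cite{AW1}.

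For the second equivalence, the direction $\Phi\in\J\Rightarrow\mu\Phi=0$ is immediate: by the description recalled in Section~\ref{prel}, the sections of $\Homs(\Ok_\Omega/\J,\CH_\Omega^Z)$ are exactly the Coleff--Herrera currents annihilated by $\J$, so $\Phi\mu=0$ for every such $\mu$ once $\Phi\in\J$. For the converse I would reduce to the first equivalence: assuming $\mu\Phi=0$ for all $\mu$ in $\Homs(\Ok_\Omega/\J,\CH_\Omega^Z)$, it suffices by the first part to show $R\Phi=0$, i.e.\ $R_k\Phi=0$ for each $k$. Here I would use the representation \eqref{pjosk1}, which writes $R_k\w d\zeta_1\w\cdots\w d\zeta_N=a_k\mu$ with $\mu$ a (vector-valued) section of $\Homs(\Ok_\Omega/\J,\CH_\Omega^Z)$ and $a_k$ almost semi-meromorphic and smooth off the non-Cohen--Macaulay locus $W$. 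Multiplying by the holomorphic $\Phi$ and invoking the regularization \eqref{pjosk2},
\[
a_k\mu\,\Phi=\lim_{\epsilon\to0}\chi(|f|^2/\epsilon)\,a_k\,(\mu\Phi),
\]
where $f$ is a tuple cutting out $W$. For each fixed $\epsilon$ the factor $\chi(|f|^2/\epsilon)a_k$ is a smooth form, while $\mu\Phi=0$ by hypothesis, so every term on the right vanishes; hence $R_k\Phi\w d\zeta_1\w\cdots\w d\zeta_N=0$ and therefore $R_k\Phi=0$. Thus $R\Phi=0$, and the first part gives $\Phi\in\J$.

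The main obstacle in this scheme is the current-to-holomorphic upgrade in the first equivalence; granting it from \cite{AW1}, the pure-dimensional refinement is essentially bookkeeping, the only delicate point being that one may pass the vanishing of $\mu\Phi$ through the almost semi-meromorphic factors $a_k$ via the regularization \eqref{pjosk2}.
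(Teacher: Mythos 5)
Your proposal is correct, but note that the paper supplies no argument at all here: its entire proof is a citation, the first statement to \cite{AW1} and the second to \cite{Aext}. For the first part you and the paper do exactly the same thing---defer to the duality theorem of \cite{AW1}, whose internal mechanism is indeed the comparison $\nabla_f U = I-R$ followed by the upgrade of the resulting chain of current solutions to holomorphic ones on a pseudoconvex domain. For the second part, your reduction to the first part via \eqref{pjosk1} and \eqref{pjosk2} is a legitimate, self-contained derivation from facts the paper has already quoted from \cite[Lemma~6.2]{AL}, and it essentially reconstructs the proof in \cite{Aext}, where the same kind of factorization of the higher components $R_k$ through Coleff--Herrera currents with almost semi-meromorphic coefficients is the key step. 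The delicate points in your argument all hold up: for fixed $\epsilon$ the factor $\chi(|f|^2/\epsilon)a_k$ is smooth, since $a_k$ is smooth off $W$ and $\chi(|f|^2/\epsilon)$ vanishes in a neighborhood of $W$; multiplication by the fixed smooth function $\Phi$ commutes with the weak limit, so $(a_k\mu)\Phi=\lim_{\epsilon\to 0}\chi(|f|^2/\epsilon)a_k(\mu\Phi)=0$ under your hypothesis; and $R_k\Phi\w d\zeta_1\w\cdots\w d\zeta_N=0$ does force $R_k\Phi=0$, since $d\zeta_1\w\cdots\w d\zeta_N$ is a nonvanishing form of top holomorphic degree. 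The one thing you should state explicitly, to rule out circularity, is that \eqref{pjosk1} is established in \cite{AL} by comparison of resolutions and the standard-extension property of pseudomeromorphic currents, not by invoking the duality statement itself. Granting that, what your route buys is a proof of the pure-dimensional statement that lives entirely inside the paper's toolbox, with \cite{AW1} as the only black box---the same black box the paper uses anyway.
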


The first statement is a basic result in  \cite{AW1}, and the second one is 
proved in \cite{Aext}.

 \section{Integral representation of holomorphic functions}\label{kodiak2}
Following \cite{Aint} we recall a formalism to generate representation formulas for holomorphic functions.
 Let $z$ be a fixed point in $\Omega$, let $\delta_{\zeta-z}$ be contraction
with the vector field
$$
2\pi i\sum_{j=1}^N (\zeta_j-z_j)\frac{\partial}{\partial \zeta_j}
$$
in $\Omega$ and let $\nabla_{\zeta-z}=\delta_{\zeta-z}-\dbar$,
where $\dbar$ only acts on $\zeta$.
We say that a current $g=g_{0,0}+\cdots +g_{n,n}$, where  
$g_{k,k}$ has bidegree $(k,k)$, is a weight with respect to $z$, if 
$\nabla_{\zeta-z} g=0$, $g$ is smooth in a \nbh of $z$, and $g_{0,0}$ is $1$ 
when $\zeta=z$.  Notice that if $g$ and $g'$ are weights, one of which is smooth, then $g'\w g$ is again a weight.
The basic observation is that if $g$ is a weight (with respect to $z$) with compact support in $\Omega$, then
\begin{equation}\label{bas}
\phi(z)=\int_{\zeta\in\Omega} g\phi
\end{equation}
if $\phi$ is holomorphic in $\Omega$, \cite[Proposition~3.1]{Aint}.

If $\Omega$ is pseudoconvex and $D\subset\subset\Omega$, then, see
\cite[Example~1]{Aint2},
we can find  a weight $g$, with respect to $z\in\overline D$,  with compact support in $\Omega$,
such that $g$ depends holomorphically on $z\in\overline D$.
If $D$ and $\Omega$ are balls with center at $0\in\Omega$, then we can take
$$
g=\chi - \dbar\chi \w \frac{\sigma}{\nabla_{\zeta-z}\sigma}
=\chi - \dbar \chi \wedge \sum_{\ell=1}^N \frac{1}{(2\pi i)^\ell}\frac{\zeta \cdot d\bar\zeta\w (d\zeta\cdot d\bar \zeta)^{\ell-1}}
{(|\zeta|^2-\bar\zeta\cdot z)^\ell},
$$
where $\chi$ that is $1$ in a \nbh of $\overline D$, with compact support in $\Omega$, and 
$$
\sigma= \frac{1}{2\pi i}\frac{\zeta \cdot d\bar\zeta}{|\zeta|^2-\bar\zeta\cdot z}.
$$

\subsection{Division-interpolation formulas}\label{poker}
Let $(E,f)$ be a Hermitian resolution of $\Ok/\J$  in $\Omega$ as in Section~\ref{kodiak}. 
In order to construct 
division-interpolation formulas with respect to $(E,f)$, in \cite{Aint2} was introduced the notion
of an associated family $H=(H^\ell_k)$ of Hefer morphisms. The $H^\ell_k$ are holomorphic
$(k-\ell)$-forms with values in $\Hom(E_{\zeta,k}, E_{z,\ell})$ that are connected in the 
following way:  To begin with, $H^\ell_k=0$ if $k-\ell< 0$, and
$H_\ell^\ell$ is equal to $I_{E_\ell}$ when $\zeta=z$. In general,
\begin{equation}\label{heferlikhet}
\delta_{\zeta-z}H^\ell_{k+1}=H_{k}^{\ell} f_{k+1}(\zeta) - f_{\ell+1}(z)H_{k+1}^{\ell+1}.
\end{equation}
If $R$ and $U$ are the associated currents in Section~\ref{kodiak}, then 
$$
HR=\sum H^0_kR_k, \quad H^1U=\sum_k H^1_kU^1_k,
$$
are scalar-valued currents, cf.~Remark~\ref{kompass}. 
It turns out, see \cite[Eq.~(5.4)]{Aint2},  that 
$$
g'=f_1(z)H^1U+HR
$$
is a weight with respect to $z$ for each $z\in\Omega\setminus Z$.
If $g$ is a smooth weight with respect to $z\in \overline D\subset \Omega$, depending
holomorphically on $z$, 
with compact support in $\Omega$
and $\Psi$ is holomorphic in $\Omega$, then $g'\w g$ is a weight with compact support 
with respect to $z\in D\setminus Z$.
By \eqref{bas} we therefore have 
\begin{equation}\label{basic2}
\Psi(z)=\int_{\zeta\in\Omega}  g'\w g\Psi = f_1(z)\int_{\zeta\in\Omega}  H^1U\w g\Psi+
\int_{\zeta\in\Omega} HR\w g \Psi 
\end{equation}
for $z\in D\setminus Z$.
Since the right hand side has a holomorphic extension across $Z$, actually  \eqref{basic2} holds for all $z$ in $D$
by continuity.

Now assume that $\phi$ is a section of $\Ok/\J$ in $\Omega$. Since $\Omega$ is 
 pseudoconvex there is some holomorphic extension $\Psi$ of $\phi$ to $\Omega$. Since $\J$ annihilates
 $R$,  see Section™\ref{kodiak}, the current $R\phi:=R\Psi$ is independent of the extension and thus intrinsic.
Since $f_1(z)$ is in $\J$,  we conclude from \eqref{basic2} that 
\begin{equation}\label{krokant}
\Phi(z)=\int_{\zeta\in\Omega} HR\w g \phi
\end{equation}
is a holomorphic function in $D$ that extends $\phi$.  
In order to obtain interesting estimates however,
we must replace $g$ by a weight with support on $\overline D$.  

For future reference notice that if $g$ only depends smoothly on $z\in D$, then \eqref{krokant} is a smooth
function in $D$ such that $\Phi-\phi$ is in $\E\J$, where $\E$ is the sheaf of smooth functions.

\section{Integral formulas in strictly pseudoconvex domains}\label{pokemon}
The material in this section is basically well-known but we need it for the construction of our formula. 
Assume that $D\subset\subset\Omega\subset\C^N$ is strictly pseudoconvex with smooth boundary.  
We can assume that $D=\{\rho<0\}$, where  $\rho$ is strictly plurisubharmonic in $\Omega$. 
If $D$ is the ball we can take $\rho=|\zeta|^2-1$.  
If $D$ is strictly convex, then  
$\delta_{\zeta-z}\partial\rho$ is holomorphic in $z\in D$, and if $\rho$ is strictly convex, then
$$
2{\rm Re\,} \delta_{\zeta-z}\partial\rho\ge \rho(\zeta)-\rho(z)+c |\zeta-z|^2
$$
for some constant $c>0$. 
If
\begin{equation}\label{mos1}
v(\zeta,z):= \delta_{\zeta-z}\partial\rho -\rho(\zeta) =-\rho(\zeta)-\sum_j\frac{\partial\rho}{\partial\zeta_j}(\zeta)(z_j-\zeta_j),
\end{equation}
 because of the strict convexity, therefore
\begin{equation}\label{mos2}
2{\rm Re\, } v(\zeta,z) \ge -\rho(z)-\rho(\zeta)+c|\zeta-z|^2,
\end{equation}
and moreover,  
\begin{equation}\label{mos3}
d({\rm Im\, } v)|_{\zeta=z} =d^c\rho(z)/4\pi.
\end{equation}
Altogether it follows that if $z$ (or $\zeta$)
is a fixed point $p$ on $\partial D$, then  
the level sets of
$|v(\zeta,z)|$ are non-isotropic so-called Koranyi balls around $p$. 
More precisely, if $x_1=-\rho(\zeta)$, $x_2={\rm Im\,} v(\zeta,z)$,
and $x_3, \ldots, x_{2N}$ are chosen so that $x_1,\ldots,x_{2N}$ is a local (non-holomorphic)
coordinate system at $p$ with $x(p)=0$, and $y$ are the corresponding coordinates for $z$, then 
\begin{equation}\label{pontus}
|v(\zeta,z)|\sim x_1 + y_1 + |x_2-y_2|+ \sum_{j=3}^{2N} (x_j-y_j)^2+\Ok(|x-y|^3).
\end{equation}
One can make a similar construction of $v$ 
if $D$ is strictly pseudoconvex. 
If $D$ is the ball and $\rho=|\zeta|^2-1$, then 
$$
v(\zeta,z)=1-\bar\zeta\cdot z
$$
which is anti-holomorphic in $\zeta$. 
In general, unfortunately, 
$\partial_\zeta v$ will only vanish to first order on the diagonal.  
We need such a function $v$ that is (essentially) anti-holomorphic
in $\zeta$ so we must elaborate the construction.

\subsection{Definition of $v$ in the general case}
First assume that $\rho(z)$ is strictly \psh and real-analytic.  Then close to the diagonal we choose
$v(\zeta,z)$ so that $v(\bar\zeta,z)$ is the (unique) holomorphic extension
of $-\rho(z)$ from the totally real
subspace $\{\zeta=\bar z\}$ of $\{(\bar\zeta,z);\ (\zeta,z)\in \Omega_\zeta\times\Omega_z\}$.
Then 
$\overline{v(z,\zeta)}=v(\zeta,z)$
and $v$ is anti-holomorphic in $\zeta$. 
We can represent $v$ by the power series
\begin{equation}\label{poker1}
v(\zeta,z)=-\sum_\alpha \frac{1}{\alpha!}\frac{\partial^\alpha\rho}
{\partial \zeta^\alpha}(\zeta)(z-\zeta)^\alpha.
\end{equation}
We claim that  
\begin{equation}\label{poker2} 
2{\rm Re\,} v= -\rho(\zeta)-\rho(z)+ L\rho(\zeta) +\Ok(|\zeta-z|^3),
\end{equation}
where $L\rho(\zeta)$ is the Levi form in the Taylor expansion of $\rho$ at $\zeta$. 
In fact, from \eqref{poker1} we have, using the notation $\rho_j=\partial\rho/\partial\zeta_j(\zeta)$ etc
and $\eta_j=z_j-\zeta_j$, 
\begin{multline*}
2{\rm Re\,} v=-2\rho(\zeta)- 2{\rm Re\,}\sum_j \rho_j\eta_j  -{\rm Re\,}\sum_{jk}\rho_{jk}\eta_j\eta_k
+\Ok(|\eta|^3)=\\
-\rho(\zeta) +L\rho(\zeta) - \Big(\rho(\zeta)+2{\rm Re\,}\sum_j \rho_j(\zeta)\eta_j +{\rm Re\,}\sum_{jk}\rho_{jk}\eta_j\eta_k
+L\rho(\zeta)\Big) +\Ok(|\eta|^3)=\\
-\rho(z)+L\rho(\zeta) -\rho(\zeta)+\Ok(|\eta|^3).
\end{multline*}
Since $\rho$ is strictly \psh it follows from \eqref{poker2} that  
\eqref{mos2} holds, and since also \eqref{mos3} holds, 
the level sets of $|v|$ are the Koryani balls discussed above and \eqref{pontus} holds. 
 %
%
From \eqref{poker1} it is easy to find a $(1,0)$-form $q$, depending holomorphically on $z$, such that
\begin{equation}\label{murbruk}
v=\delta_{\zeta-z} q-\rho(\zeta).
\end{equation}

\smallskip
We now turn to the case when $\rho$ is just smooth and strictly plurisubharmonic.
Let $\chi$ be a smooth function on $[0,\infty)$ that is $1$ when $t<1/2$ and $0$ when $t>1$.
We claim that the series 
\begin{equation}\label{sebra}
v(\zeta,z)=-\sum_\alpha \frac{1}{\alpha!}\frac{\partial^\alpha\rho}
{\partial \zeta^\alpha}(\zeta)(z-\zeta)^\alpha \chi(c_{|\alpha|}|z-\zeta|)
\end{equation}
converges uniformly, with all its derivatives,  and therefore defines a smooth function 
in a \nbh of the diagonal in $\Omega\times\Omega$, if $c_k$ tends to infinity fast enough.
In fact, notice that
$$
|(z-\zeta)^\gamma\chi^{(\tau)}(c_k|z-\zeta|^k)\le c_k^{-|\gamma|}\sup|\chi^{(\tau)}|.
$$
If  
$
m_k=\sup_{|\alpha|\le k}| \partial^\alpha\rho/\partial \zeta^\alpha|/\alpha!
$
it is enough to choose $c_k$ so that for any fixed $\ell$, $c_k>> m_{k+\ell}$ for all large enough $k$
(where $>>$ also depends on $\ell$ derivatives of $\chi$). 
Thus the choice of $c_k$ depends on the ultra-differentiable class of $\rho$.
We also claim that $v$ is almost anti-holomorphic in $\zeta$ in the sense that
\begin{equation}\label{stork2}
\partial_\zeta v=\Ok(|\zeta-z|^\infty).
\end{equation}
To see this, given a positive integer $\nu$, let us write \eqref{sebra} as $A+B$, where $A$ is the sum over $|\alpha|\le \nu$.
Now $\partial_\zeta A$ becomes a telescoping sum plus the terms where  $\partial_\zeta$ falls on $\chi$.
The sum gives rise to terms that are $\Ok(|\zeta-z|^\nu)$, whereas the remaining terms vanish close to the diagonal.
Clearly $\partial_\zeta B= \Ok(|\zeta-z|^\nu)$. Thus \eqref{stork2} holds.

\begin{remark}
Notice that $v(\bar\zeta,z)$ is a smooth extension of $-\rho(\bar\zeta)$
from the totally real
subspace $\{\zeta=\bar z\}$ of $\{(\bar\zeta,z);\ (\zeta,z)\in \Omega_\zeta\times\Omega_z\}$
such that  $\dbar v(\bar\zeta,z)=\Ok(|z-\bar\zeta|^\infty)$. That is,   $v(\bar\zeta,z)$ is a so-called almost holomorphic
extension. Such extensions are well-known in the literature and can be constructed in many ways. 
\end{remark}

Again one can find $q$ that is holomorphic in $z$ such that \eqref{murbruk} holds. 
Moreover, 
$\overline{v(z,\zeta)}-v(\zeta,z)=\Ok(|\zeta-z|^\infty)$
but this property is not used  in this paper.  

\smallskip

We extend $v$ to $\Omega\times\Omega$ by patching with $|\zeta-z|^2$, that is,
if  $\eta=\zeta-z$ we let  
$$
\tilde v= \chi(|\eta|^2)v+(1-\chi(|\eta|^2)|\eta|^2, \quad \tilde q=\chi(|\eta|^2)q+(1-\chi(|\eta|^2)\partial |\eta|^2.
$$
so that 
$
\tilde v=\delta_{\zeta-z} \tilde q-\rho(\zeta).
$
In what follows, for simplicity,  we write $v$ and $q$ even for the extensions.

\begin{remark}\label{laban} %
Assume that we have an
embedding $\psi\colon \Omega\to \Omega'$ into a higher dimensional ball $\Omega'\subset \C^{n'}_{\zeta'}$, 
$D'\subset\subset\Omega'$ is the unit ball, and $D=\psi^{-1} D'$. Assume in addition that
$\psi(\Omega)$ intersects $\partial D'$ transversally. 
Then $\rho=|\phi|^2-1$ is a strictly pseudoconvex defining function for $D$ 
and $v(\zeta,z)=1-\bar\psi(\zeta)\cdot\psi(z)$ is globally defined in $\Omega\times\Omega$,
holomorphic in $z$, anti-holomorphic in $\zeta$, and equal to $-\rho$ in the diagonal.  Moreover, 
one can find a $(1,0)$-form $q$ in $D$,  depending holomorphically on $z$, such that
\eqref{murbruk} holds.
 \end{remark}

\begin{ex}\label{laban2}
There are non-trivial domains that admit a $v$ as in Remark~\ref{laban}.
One can check that 
$D=\{ z\in \C^2;\  |z_1|^2 + |z_2|^2+4|1-z_1z_2|^2 <3\}$ is strictly pseudoconvex. 
It is the inverse image of the unit ball in $\C^3$ under the mapping 
$\psi(z_1,z_2)=(1/\sqrt{3})(z_1, z_2, 2(1-z_1z_2))$ and hence
there is a global $v(\zeta,z)$ in $D$ as in the remark. Notice that $\alpha=dz_1/z_1$ is a
closed $1$-form in $D$ that is not exact since its integral over the cycle $\theta\mapsto (e^{i\theta},e^{-i\theta})$
is $2\pi i\neq 0$. Thus $H^1(D,\C)\neq 0$.
\end{ex}

\subsection{The weight $g^\alpha$}\label{galpha}
Let $\alpha$ be any complex number. We  claim that for each fixed $z\in D$, 
\begin{equation}\label{pommes}
g^\alpha=\Big(1+\nabla_{\zeta-z} \frac{q}{-\rho}\Big)^{-\alpha+1}
=\Big(\frac{v}{-\rho}+\dbar\frac{q}{\rho}\Big)^{-\alpha+1}
\end{equation}
is a weight with respect to $z$.  
In fact, the scalar term within the second brackets has positive real part in view of \eqref{mos2} and hence
$g^\alpha$ is well-defined by elementary functional calculus, see \cite{Aint},
and  $\nabla_{\zeta-z}g=0$ since $\nabla_{\zeta-z}^2=0$.
It is also clear that $g^\alpha_{0,0}=1$ when $\zeta=z$. Thus the claim holds.
 
A simple computation gives that 
\begin{equation}\label{pater}
g^\alpha=\sum_{k=0}^N c_{k,\alpha}\frac{ (-\rho)^{\alpha} \beta_k}{v^{\alpha+k+1}},
\end{equation}
where $c_{\alpha,k}$ are constants and $\beta_k$ are $(k,k)$-forms that are smooth in $\Omega$.  
If ${\text Re\,} \alpha$ is positive, then $g^\alpha$ vanishes on the boundary of $D$ for each fixed $z\in D$.

In case $D$ is the ball, or if, e.g., as in Remark~\ref{laban},
this weight depends holomorphically on $z\in D$. In general it only depends
smoothly on $z$; however, \eqref{stork2} holds, which is crucial in the proofs of Theorems~\ref{thmA} and \ref{thmB}.

\begin{remark}
For a given $X$ in Theorem~\ref{thmA} or Theorem~\ref{thmB}  it is in fact enough for our proofs
to choose $v$ such that
$
\partial_\zeta v=\Ok(|\zeta-z|^{\nu})
$
for a large enough $\nu$.  Such a $v$ is obtained by restricting the sum \eqref{sebra} to
$|\alpha|\le \nu+1$;
then of course the factors $\chi(c_{|\alpha|}|z-\zeta|^2)$ are not needed.
\end{remark}

\section{Proof of Theorem~\ref{thmA} when $D$ is the ball}\label{ball}
Let us first assume  that our $v(\zeta,z)$ is defined in $\Omega\times\Omega$,
holomorphic in $z$ and anti-holomorphic in $\zeta$, as in the case with the ball.
Recall, cf.~\eqref{pater},  that  for fixed $z\in D$, the weight
$g^\alpha$ vanishes to order $\alpha$ at $\partial D$. If we define it as $0$ outside
$D$, it is therefore of class  $C^{\alpha-1}$.


\begin{lma}\label{prutt}
If $\alpha$ is large enough and $\phi$ is holomorphic in a \nbh of $X\cap \overline D$, then
\begin{equation}\label{utvid}
\Phi(z)= \int _{\zeta\in D} HR\w \frac{(-\rho)^{\alpha}\beta_n }{v^{n+\alpha+1}} \phi
\end{equation}
is a holomorphic extension of $\phi$ to $D$. 
\end{lma}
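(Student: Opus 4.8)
The goal is to show that \eqref{utvid} produces a genuine holomorphic extension of $\phi$ when $v$ is anti-holomorphic in $\zeta$ (the ball case). The strategy is to recover the general interpolation formula \eqref{krokant} from Section~\ref{poker} and then verify that the specific weight $g^\alpha$ used in \eqref{utvid} is admissible, i.e.\ that it is a weight in the sense of Section~\ref{kodiak2} with the appropriate support and regularity. First I would set $\alpha$ large enough that $g^\alpha$, extended by $0$ outside $D$, lies in $C^{\alpha-1}$, as noted just before the statement; this ensures that $g^\alpha$ is a smooth weight with respect to $z$ up to the needed order, and that the integral \eqref{utvid} makes sense despite $HR$ being a current supported on $Z$.

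The key point is to apply the division-interpolation machinery. Recall from \eqref{pater} that $g^\alpha=\sum_{k=0}^N c_{k,\alpha}(-\rho)^\alpha\beta_k/v^{\alpha+k+1}$, so the term appearing in \eqref{utvid} is precisely the $k=n$ component, and in fact the only component that survives after wedging with $HR$: since $HR=\sum_k H^0_kR_k$ contributes a $(0,k)$-current with $k\ge\kappa$ on the $n$-dimensional $Z$, and $\beta_k$ is a $(k,k)$-form, the bidegree and dimension count forces the surviving contribution in $\int_{\zeta\in D}HR\w g^\alpha\phi$ to come from the single term with $\beta_n$. Thus \eqref{utvid} equals $\int_{\zeta\in D}HR\w g^\alpha\phi$ up to the harmless constant $c_{n,\alpha}$. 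Since $g^\alpha$ is a weight with respect to $z$ that depends holomorphically on $z\in D$ (because $v$ is anti-holomorphic in $\zeta$ and $q$ holomorphic in $z$, so $g^\alpha$ involves $z$ only holomorphically), I can invoke the computation behind \eqref{krokant}: with $g$ replaced by $g^\alpha$, the formula $\int_{\zeta}HR\w g^\alpha\phi$ is exactly the holomorphic extension of $\phi$. Concretely, I would form the full weight $g'\w g^\alpha$ as in \eqref{basic2}, use that $f_1(z)$ lies in $\J$ to kill the $H^1U$ term, and conclude as in \eqref{krokant} that the result is holomorphic in $z$ and interpolates $\phi$.

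There are two things to check carefully. First, holomorphicity in $z$: the weight $g^\alpha$ and the Hefer forms $H$ depend holomorphically on $z$, and the current $R\phi:=R\Psi$ is intrinsic by Proposition~\ref{stekpanna} and independent of the holomorphic extension $\Psi$; differentiating under the current pairing in $z$ then gives holomorphicity, exactly as in the derivation of \eqref{krokant}. Second, the interpolation property $i^*\Phi=\phi$: this follows because $\Phi-\Psi\in\J$ as a consequence of \eqref{basic2} together with $f_1(z)\in\J$, so $\Phi$ and $\Psi$ induce the same element of $\Ok_X$, namely $\phi$. Here I should note that $g^\alpha$ has compact support issues handled by the fact that it vanishes to order $\alpha$ on $\partial D$ and is set to $0$ outside $D$, so that integrating over $\zeta\in D$ is legitimate and the boundary contributes nothing.

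**Main obstacle.** The delicate step is justifying that replacing the compactly supported weight $g$ of Section~\ref{poker} by $g^\alpha$, which merely vanishes to finite order at $\partial D$ rather than having compact support strictly inside $\Omega$, still yields a valid weight for which \eqref{bas} and \eqref{basic2} apply. I expect this is where "$\alpha$ large enough" is used: one needs $g^\alpha\in C^{\alpha-1}$ with enough regularity so that pairing against the current $HR$ (whose order is controlled by the free resolution and the degree of non-reducedness) is well-defined, and so that the boundary integration by parts underlying \eqref{bas} produces no boundary term because $g^\alpha$ and its relevant derivatives vanish on $\partial D$. Verifying that the order of vanishing dominates the order of the current $R$ is the real content; once that is in place, the holomorphicity and interpolation follow formally from the identities already established in Sections~\ref{kodiak} and \ref{kodiak2}.
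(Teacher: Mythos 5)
Your proposal is correct and follows essentially the same route as the paper: take $\alpha$ larger than the order of the currents $U$ and $R$, extend $g^\alpha$ by zero so it becomes a $C^{\alpha-1}$ weight on $\Omega$, apply \eqref{basic2} with $g=g^\alpha$, and conclude via \eqref{krokant} that \eqref{utvid} is a holomorphic extension (your bidegree count identifying the $\beta_n$ term is the same implicit identification the paper makes). The point you flag as the "main obstacle" — that the vanishing order of $g^\alpha$ at $\partial D$ must dominate the order of the currents — is precisely the hypothesis the paper's proof begins with, so nothing is missing.
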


\begin{proof}
Assume that $\alpha$ is larger than the order of the currents $U$ and $R$.
Notice that the function that is $(-\rho)^{\alpha}$ in $D$ and $0$ outside $D$ is
in $C^{\alpha-1}$. For each fixed $z\in D$, therefore $g^\alpha$, defined as $0$ outside $D$, is a weight
in $\Omega$ of class $C^{\alpha-1}$.  Thus \eqref{basic2} holds with $g=g^\alpha$. 
As in Section~\ref{poker} we conclude that  \eqref{krokant}, that is, \eqref{utvid}, is a holomorphic
extension of $\phi$ to $D$. 
\end{proof}

We shall now make an a~priori estimate of $\Phi$ in terms of $\phi$. Let us assume that
$\phi$ is defined on $D'\cap X$, where $D'\supset\supset D$. 
Let us also assume that $X$ is defined and Cohen-Macaulay in $\Omega$.
Then we can assume that our Hermitian resolution $(E,f)$ has length $\kappa=N-n$,
and hence $HR=H^0_\kappa R_\kappa$ has bidegree $(\kappa,\kappa)$.  

If either $|\zeta-z|\ge \epsilon$ or $\zeta$ is far from
$\partial D$, then $|v|$ is strictly positive in view of \eqref{pontus}. 
By a suitable partition of unity we therefore have to estimate the $L^p$-norm of a finite number of
terms 
\begin{equation}\label{term}
\int _{\zeta\in D} HR\w \frac{(-\rho)^{\alpha}}{v^{n+\alpha+1}}\beta(\zeta,z)\phi,
\end{equation}
where $\beta$ is smooth with compact support in a small \nbh $U$ of a point $p\in \partial D\cap Z$,
plus some terms with no singularity at all. 

 Let us consider a term \eqref{term}.  Let us change notation and replace $\zeta$ by 
 coordinates $(\zeta,\tau)$ in $U$ such that
$Z\cap U=\{\tau=0\}$.   Let $\mu$ be one of the components of $R=R_\kappa$.
It is a Coleff-Herrera current, cf.~Section~\ref{kodiak}, so
it can be written 
$$
\mu=\gamma(\zeta,\tau)\dbar\frac{d\tau}{\tau^{M+\1}}\w d\zeta
$$
as in \eqref{tomat3}, cf.~\eqref{hatmu}.    Let us incorporate $H$ in $\beta$.
Integrating with respect to $\eta$, that is, taking the push-forward $\pi_*$, where $\pi$ is the projection
$(\zeta,\tau)\mapsto \zeta$, we get,  see \eqref{hatmuintegral}, 
$$
\int_{\zeta\in Z\cap D} \partial_\tau^M|_{\tau=0} \Big(\frac{(-\rho)^{\alpha}}{v^{n+\alpha+1}}\beta(\zeta,\tau;z)\gamma\phi\Big).
$$
Here $\partial^M_\tau$ stands for $\partial^{|M|}/\partial \tau^M$. In what follows
it is understood that we evaluate at $\tau=0$ after applying this operator and we thus omit $|_{\tau=0}$. 
Using that $v$ is anti-holomorphic in $(\zeta,\tau)$ we have 
$$
\sum_{m\le M}\int_{\zeta\in Z\cap D}
\frac{1}{v^{n+\alpha+1}} {M\choose m} \partial^{M-m}_\tau( (-\rho)^\alpha \beta) \partial_\tau^{m}(\gamma\phi).
$$
Thus we get a sum of terms of the form
$$
\int_{\zeta\in Z\cap D}
\frac{1}{v^{n+\alpha+1}} (-\rho)^{\alpha-\ell} \beta \partial_\tau^{m}(\gamma\phi),
$$
where $\ell\le |M-m|$ and $\beta$ is smooth. 
Since $\rho$ is a defining function we may assume that $\partial\rho$ is nonzero in $U$.  If  
$$
T=\frac{1}{|\partial\rho|^2}\sum_j \frac{\partial \rho}{\partial\bar \zeta_j}\frac{\partial}{\partial\zeta_j},
$$
then $T\rho=1$ and hence
$$
(-\rho)^{\alpha-\ell} \beta=\beta' T(-\rho)^{\alpha-\ell+1}.
$$
where $\beta'=\beta/ (\ell-\alpha-1)$.  
If $T'$ is the formal adjoint of $T$, again using that $v$ is anti-holomorphic in $\zeta$, 
$(-\rho)^{\alpha-\ell+1}=0$ on $\partial D$, and $\phi$ is defined on $X\cap\overline D$, an integration by 
parts gives
$$
\int_{\zeta\in Z\cap D}
\frac{1}{v^{n+\alpha+1}} (-\rho)^{\alpha-\ell+1} T'\big(\beta' \partial_\tau^{m}(\gamma\phi)\big).
$$
Repeating this procedure $\ell$ times we see that our extension is $\Phi$ a finite sum of terms
\begin{equation}\label{taktik}
A(z)=\int_{\zeta\in Z\cap D}
\frac{1}{v^{n+\alpha+1}} (-\rho)^{\alpha} \beta \partial_\zeta^a\partial_\tau^{m}(\gamma\phi),
\end{equation}
where $a$ is a multiindex such that $|a|\le |M-m|$ and $\beta$ is smooth.  It follows from \eqref{tomat2} and \eqref{tomat4} that 
\begin{equation}\label{trump}
|\partial_\zeta^a\partial_\tau^{m}(\gamma\phi)|\lesssim |\phi|_X.
\end{equation}
Assume that $r>-1$. Provided that $\alpha$ is large enough, from \eqref{taktik}, \eqref{trump}, and 
\eqref{polo1} in Lemma~\ref{uppsk} below we have
$$
\int_{z \in D} \delta^r |A| \lesssim
\int_{\zeta\in Z\cap D}
(-\rho)^{N-n+r} |\phi|_X.
$$
Summing up all terms $A$ we get the desired a~priori estimate \eqref{trauma} in case $p=1$. 
Now assume that $1<p<\infty$. 
Let us choose $\epsilon>0$ so that $\alpha-(q/p)\epsilon>-1$.
By  H\"older's equality  and \eqref{polo3} below,  
$$
|A|^p\lesssim \Big(\int_{\zeta\in Z\cap D}\frac{\delta^{\alpha-\frac{q}{p}\epsilon}}{|v|^{n+\alpha+1}}\Big)^{p/q}
\int_{\zeta\in Z\cap D}\frac{\delta^{\alpha+\epsilon}}{|v|^{n+\alpha+1}}|\phi|^p_X\lesssim
\delta(z)^{-\epsilon}\int_{\zeta\in Z\cap D}\frac{\delta^{\alpha+\epsilon}}{|v|^{n+\alpha+1}}|\phi|^p_X.
$$
If in addition $r-\epsilon>-1$, an application of \eqref{polo1} gives 
\eqref{trauma}.

\smallskip

If $\phi$ is just defined in $X\cap D$ we apply the same construction and 
argument to the slightly smaller strictly pseudoconvex domains
$D_\epsilon=\{\rho<-\epsilon\}$.  It is not hard to see that the same computation works in $D_\epsilon$,
with estimates that are uniform in
$\epsilon$.
We thus get
$\Phi_\epsilon$ in $D_\epsilon$ that interpolate $\phi$ in $D_\epsilon\cap X$ such that
\begin{equation}\label{tomte}
\int_{D_\epsilon} \delta^{r}_\epsilon | \Phi_\epsilon|^p dV_D \le 
C_{r,p}^p \int_{Z\cap D_\epsilon} \delta_\epsilon^{\kappa+r} |\phi|^p_X dV_Z,
\end{equation}
where $C_{r,p}$ is uniform in $\epsilon$ and $\delta_\epsilon\sim-\rho_\epsilon:=-(\rho+\epsilon)$
is the distance to $\partial D_\epsilon$.  Clearly the right hand sides of \eqref{tomte} is dominated by
the right hand side of \eqref{trauma}. If this is finite, hence
there is a subsequence $\Phi_{\epsilon_j}$ converging
to a function $\Phi$ in $D$ uniformly on compact sets in $D$.  
In particular, the convergence is in $\E(D)$,  and since 
$$
(\Phi_{\epsilon_j}-\phi)\mu=0
$$
for all $\mu\in \Homs(\Ok_\Omega/\J, \CH_\Omega^Z)$ on compact subsets of $D$,
this must hold for $\Phi$ as well, cf.~Section~\ref{kodiak}. Thus  $\phi$ is the image of $\Phi$ in $\Ok_X$, that is, 
$\Phi$ is an extension of $\phi$.  Clearly $\Phi$ satisfies \eqref{trauma} and 
thus Theorem~\ref{thmA} is proved in case 
when $D$ is a ball and $\Ok_X$ is Cohen-Macaulay.

\begin{remark}
One can check that the limit 
$$
\Phi(z)= \int _{\zeta\in D} HR\w \frac{(-\rho)^{\alpha}\beta_n }{v^{n+\alpha+1}} \phi
=\lim_{\epsilon\to 0} \int _{\zeta\in D_\epsilon} HR\w \frac{(-\rho_\epsilon)^{\alpha}\beta_n }{v_\epsilon^{n+\alpha+1}} \phi
$$
exists for each $z\in D$, and  thus it is not necessary to take a subsequence in the argument above.
However, we do not need this refinement and omit the details.
\end{remark}

We will now point out how to estimate \eqref{utvid} if $\Ok_X$ has non-Cohen-Macaulay points in $D$.
Then, cf.~Section~\ref{kodiak},
$$
HR=H^0_{\kappa}R_{\kappa}+ \cdots +H^0_{N-1}R_{N-1}.
$$
Recall the representations \eqref{pjosk1}. Since $\Ok_X$ is Cohen-Macaulay at points on $\partial D\cap Z$,
$a_k$ are smooth there and hence we  can proceed
in the same way as before at such points.  

Let  $U\subset\subset Z\cap D$  be a small \nbh of a point on $Z\cap D$ and let us 
choose coordinates $(\zeta,\tau)$ in $U$ as before. Then we have, cf.~\eqref{pjosk1} and \eqref{tomat3}, that
$$
H^0_k R_k= a_k \mu= a_k \gamma \dbar\frac{d\tau}{\tau^{M+\1}}.
$$ 
Since we are far from the boundary $1/v$ is  bounded and thus we get
terms like  
$$
\int_{(\zeta,\tau)\in D} R \w \beta \phi=
\int_{(\zeta,\tau)\in D}
a_k \beta \dbar\frac{d\tau}{\tau^{M+\1}}  \w \gamma  \phi,
$$
where $\beta$ is smooth  and has compact support in $U$; also $H$ is
incorporated in $\beta$ here.

Integrating with respect to  $\tau$, that is, applying $\pi_*$,
we get by Lemma~\ref{baka} a sum of terms like
\begin{equation}\label{tror}
\int_{\zeta\in Z\cap D} b_m(\cdot, z) \partial_\tau^{m}(\gamma\phi)
\end{equation}
for $m\le M$, where $b_m(\zeta,z)$ are currents with compact support in $U$ that depend holomorphically on $z$ 
in $D$.   
By usual Cauchy estimates,  \eqref{tror} is controlled by 
the $L^p$-norm of $\partial_\tau^{m}(\gamma\phi)$ over $U$. 
In view of \eqref{tomat2} and \eqref{tomat3} 
we get the same a~priori estimate as before. Thus 
Theorem~\ref{thmA} is fully proved in the case when $D$ is the ball, except for the following two
lemmas.

\begin{lma} \label{baka}
With the notation in the proof, let  $a=\beta a_k$,
and $\psi=\gamma\phi$. 
Then
$$
\pi_*\Big( \dbar \frac{d\tau}{\tau^{M+\1}}  a \phi\Big)=
\sum_{m\le M} b_m\partial_\tau^{m}\psi|_{\tau=0},
 $$
where $b_m$ are currents on $U$ with compact support in $U$. 
If, in addition, $\beta$ depends holomorphically on a parameter $z$, then
also $b_m$ will do. 
\end{lma}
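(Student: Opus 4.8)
The plan is to exploit the fact that the residue factor $\dbar\frac{d\tau}{\tau^{M+\1}}$ is annihilated by each of the holomorphic functions $\tau_j^{M_j+1}$, so that the whole current only ``sees'' the Taylor polynomial of $\psi$ in $\tau$ of order at most $M$. Set $T:=\beta a_k\,\dbar\frac{d\tau}{\tau^{M+\1}}$. First I would record that $T$ is a well-defined pseudomeromorphic current with compact support in $U$: the product of the almost semi-meromorphic $a_k$ with the residue current is defined through the regularization \eqref{pjosk2}, and the smooth compactly supported factor $\beta$ is harmless. Since $\tau_j^{M_j+1}$ is holomorphic it commutes past $\beta a_k$, and because $\tau_j^{M_j+1}\dbar\frac{d\tau_j}{\tau_j^{M_j+1}}=0$ we obtain $\tau_j^{M_j+1}T=0$ for every $j$; thus $T$ is annihilated by the ideal $\langle\tau_1^{M_1+1},\ldots,\tau_\kappa^{M_\kappa+1}\rangle$.

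Next, since $\psi=\gamma\phi$ is holomorphic I would Taylor-expand it in $\tau$ at $\tau=0$, writing $\psi=\sum_{m\le M}\frac{1}{m!}(\partial_\tau^m\psi|_{\tau=0})\tau^m+\varrho$, where the coefficients $\partial_\tau^m\psi|_{\tau=0}$ are holomorphic functions of $\zeta$ alone and the remainder $\varrho$ lies in $\langle\tau_1^{M_1+1},\ldots,\tau_\kappa^{M_\kappa+1}\rangle$, hence is killed by $T$. Therefore $T\psi=\sum_{m\le M}\frac{1}{m!}(\partial_\tau^m\psi|_{\tau=0})\,(\tau^m T)$. Applying $\pi_*$ and using the projection formula to pull the fiber-constant functions $\partial_\tau^m\psi|_{\tau=0}$ out of the pushforward yields $\pi_*(T\psi)=\sum_{m\le M}b_m\,\partial_\tau^m\psi|_{\tau=0}$ with $b_m:=\frac{1}{m!}\pi_*(\tau^m T)$, which are currents on $U$ with compact support inherited from $\beta$, as claimed. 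For the parametrized statement, if $\beta$ depends holomorphically on $z$ then so do $T$ and each $\tau^m T$, and since $\pi_*$ commutes with $\partial_{\bar z}$ the coefficients $b_m$ inherit holomorphic dependence on $z$.

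The step I expect to be the main obstacle is the first one: verifying that $T$ is a genuine current annihilated by $\langle\tau_1^{M_1+1},\ldots,\tau_\kappa^{M_\kappa+1}\rangle$ even though $a_k$ is singular along the non-Cohen-Macaulay locus $W\subset\{\tau=0\}$. The associativity needed to move the holomorphic factors $\tau_j^{M_j+1}$ (and the holomorphic divisors of $\varrho$) past the almost semi-meromorphic $a_k$ is exactly what must be justified via \eqref{pjosk2} and the pseudomeromorphic calculus; once that is in place, the Taylor expansion and the pushforward are purely formal.
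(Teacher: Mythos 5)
Your proof is correct and follows essentially the same route as the paper's: annihilation of $T=\beta a_k\,\dbar(d\tau/\tau^{M+\1})$ by the ideal $\la\tau^{M+\1}\ra$ justified through the regularization \eqref{pjosk2}, Taylor expansion of $\psi$ in $\tau$ with remainder in that ideal, and the projection formula to pull the fiber-constant coefficients $\partial_\tau^m\psi|_{\tau=0}$ out of $\pi_*$. The step you flagged as the main obstacle is dealt with in the paper exactly as you propose (for each $\epsilon>0$ the product $\chi(|f|^2/\epsilon)\,a_k\,\dbar(d\tau/\tau^{M+\1})$ is a smooth form times a current, so the holomorphic factors $\tau_j^{M_j+1}$ commute through and the annihilation survives the limit); the only cosmetic difference is that the paper identifies $\tau^m T$ explicitly as $\beta a_k\,\dbar(d\tau/\tau^{M-m+\1})$, where you keep $b_m=\tfrac{1}{m!}\pi_*(\tau^m T)$ abstract.
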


\begin{proof} 
Recall from Section~\ref{kodiak}, cf.~\eqref{pjosk2}, that 
$$
a\dbar(d\tau/\tau^{M+\1})=\lim_{\epsilon\to 0} \chi(|f|^2/\epsilon) d\dbar(d\tau/\tau^{M+\1}),
$$
where $f$ is a holomorphic tuple with zero set $W$. 
It follows 
that $ \tau^{M'} a \dbar(d\tau/\tau^{M+\1})=0$ if $\tau^{M'}$ is in the ideal
$\la \tau^{M+\1}\ra$, that is, if $M'_j\ge M_j+1$ for some $j$.
Since $\psi$ is holomorphic we have %
$$
\psi(\zeta,\tau)=\sum_{m\le M} \psi_m(\zeta)\tau^m + \cdots
$$
where $\cdots$ are terms in $\la \tau^{M+\1}\ra$.  
It follows that
$$
a \psi \dbar(d\tau/\tau^{M+\1})=\sum_{m\le M} \psi_m(\zeta) a  \dbar(d\tau/\tau^{M-m+\1}),
$$
and hence
$$
\pi_*(a \psi \dbar(d\tau/\tau^{M+\1}))=\sum_{m\le M} \psi_m(\zeta) \pi_*( a  \dbar(d\tau/\tau^{M-m+\1})).
$$
Now the lemma follows, since the last factor depends holomorphically on  $z$.  
\end{proof}

\begin{lma}\label{uppsk}
With the notation above we have, for $s>-1$ and $b>0$,  we have the estimates
\begin{equation}\label{polo1}
\int_{z\in D}\frac{\delta(z)^s dV(z)}{|v|^{N+1+s+b}}\lesssim \frac{1}{\delta(\zeta)^b}
\end{equation}
and 
\begin{equation}\label{polo3}
\int_{\zeta\in Z\cap D}\frac{\delta(\zeta)^sdV(\zeta)}{|v|^{n+1+s+b}}\lesssim \frac{1}{\delta(z)^b}.
\end{equation}
\end{lma}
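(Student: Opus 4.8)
The plan is to reduce both inequalities to an elementary iterated integral by exploiting the non-isotropic Koranyi geometry recorded in \eqref{pontus}. Since $|v|$ is bounded below by a positive constant whenever $|\zeta-z|$ stays away from $0$ or $\zeta$ stays away from $\partial D$ (by \eqref{pontus}), a partition of unity reduces \eqref{polo1} to the case where the integration takes place in a small \nbh $U$ of a fixed boundary point $p\in\partial D$; likewise \eqref{polo3} reduces to a \nbh of a point $p\in\partial D\cap Z$. Off these neighbourhoods the integrands are bounded and the weights $\delta^s$ are integrable because $s>-1$, so those contributions are harmless.

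In $U$ I would use the real coordinates of \eqref{pontus}, writing $x$ for the coordinates of $\zeta$ and $y$ for those of $z$, with $x_1=-\rho(\zeta)\sim\delta(\zeta)$ and $y_1=-\rho(z)\sim\delta(z)$. There the volume form is comparable to Lebesgue measure and, combining \eqref{mos2}, \eqref{mos3}, and \eqref{pontus}, one has the lower bound
\begin{equation*}
|v(\zeta,z)|\gtrsim \delta(\zeta)+\delta(z)+|x_2-y_2|+\sum_{j=3}^{2N}(x_j-y_j)^2 .
\end{equation*}
For \eqref{polo1} I fix $\zeta$ (hence $a:=\delta(\zeta)$ and $x$) and integrate in $z$. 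After the measure-preserving affine change $t=y_1$, $u=x_2-y_2$, $w'=(x_3-y_3,\dots,x_{2N}-y_{2N})$, it suffices to bound
\begin{equation*}
\int \frac{t^s\,dt\,du\,dw'}{(a+t+|u|+|w'|^2)^{N+1+s+b}},\qquad w'\in\R^{2N-2},\ u\in\R,\ t\ge 0 .
\end{equation*}
I would integrate in the order $w'$, then $u$, then $t$. Integrating the $2N-2$ parabolic variables $w'$ lowers the exponent to $(a+t+|u|)^{-(2+s+b)}$; integrating $u$ then yields $(a+t)^{-(1+s+b)}$; and the final $t$-integral, after the scaling $t=a\xi$, equals $a^{-b}\int_0^\infty \xi^s(1+\xi)^{-(1+s+b)}\,d\xi$. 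This last integral converges precisely because $s>-1$ (at $\xi=0$) and $b>0$ (at $\xi=\infty$), which gives \eqref{polo1}.

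Estimate \eqref{polo3} follows by the identical computation carried out on $Z\cap D$. Since $Z$ meets $\partial D$ transversally, $Z\cap D$ is strictly pseudoconvex in $Z$, the restriction $-\rho|_Z$ is a strictly \psh defining function, and $\delta|_Z$ is comparable to the intrinsic boundary distance, so the same Koranyi lower bound holds with $2N$ replaced by $2n$. Now $\zeta$ is the integration variable and $z$ is fixed; the dimension count is again balanced because integrating the $2n-2$ parabolic variables lowers the exponent $n+1+s+b$ to produce exactly $(\,\cdot\,)^{-(2+s+b)}$, after which the $u$- and $t$-integrations proceed verbatim and yield $\delta(z)^{-b}$. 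The only genuinely delicate points are the uniform validity of the non-isotropic lower bound on $|v|$ (controlling the $\Ok(|x-y|^3)$ error in \eqref{pontus}, which is absorbed by the quadratic estimate \eqref{mos2}) and, for \eqref{polo3}, the transversality input ensuring that $Z\cap D$ carries the same Koranyi structure; the remaining iterated integral is a routine Beta-integral calculation.
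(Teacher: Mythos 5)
Your treatment of \eqref{polo1} is correct and is exactly the paper's route: the paper reduces \eqref{polo1}, via the Koranyi representation \eqref{pontus}, to the elementary estimate
$$
\int_{|x|<1,\ x_1>0}\frac{x_1^s\,dx_1\cdots dx_{2N}}{\big(x_1+y_1+|x_2-y_2|+\sum_{j=3}^{2N}|x_j-y_j|^2\big)^{N+1+s+b}}\lesssim \frac{1}{y_1^b},
$$
which it then cites as standard; your iterated (parabolic variables, then $u$, then $t$) Beta-integral computation supplies precisely the details behind that citation, and your bookkeeping of where $s>-1$ and $b>0$ enter is right.

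For \eqref{polo3}, however, there is a genuine gap. In the lemma $z$ is an arbitrary point of $D$, not a point of $Z$, and this generality is what is actually used: in the proof of Theorem~\ref{thmA}, \eqref{polo3} is applied to bound $|A(z)|^p$ for the extension variable $z$ ranging over all of $D$, before $\delta(z)^r|A|^p$ is integrated over $D$. Your argument treats $Z\cap D$ as a strictly pseudoconvex domain in $Z$ with its intrinsic Koranyi structure and implicitly assigns to the fixed point $z$ coordinates $y\in\R^{2n}$ in that structure; this only makes sense when $z\in Z$. For $z\in D\setminus Z$ the function $\zeta\mapsto v(\zeta,z)$, restricted to $\zeta\in Z$, is not the $v$-function of the domain $Z\cap D$, so ``the same Koranyi lower bound with $2N$ replaced by $2n$'' is not even defined, let alone proved. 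This is exactly the point the paper disposes of in one sentence: the ``worst case'' in \eqref{polo3} is when $z$ lies on $Z$, and then \eqref{polo1} applied to $Z\cap D$ gives the result. To justify that reduction one can take a nearest point $z'\in Z$ to $z$ and show, from the ambient bound \eqref{pontus}, that for $\zeta\in Z$ one has $|v(\zeta,z)|\gtrsim \delta(\zeta)+\delta(z)+|\mathrm{Im}\,v(\zeta,z)|+|\zeta-z'|^2$, and that $-\rho|_Z$ together with $\mathrm{Im}\,v(\cdot,z)$ can be completed to coordinates on $Z$ (this is where transversality enters, via \eqref{mos3} and continuity for $z$ near $Z\cap\partial D$; when $z$ is far from $Z$ or from $\partial D$, $|v|$ is bounded below and there is nothing to prove). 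Note that retaining the term $|\mathrm{Im}\,v|$ is essential: the cruder bound $|v(\zeta,z)|\gtrsim \delta(\zeta)+\delta(z)+|\zeta-z'|^2$, which discards it, only yields $\delta(z)^{-b-1/2}$ after your iterated integration, i.e.\ it loses a factor $\delta(z)^{-1/2}$. So the statement is true and your computation handles the case $z\in Z\cap D$, but as written the proposal does not prove \eqref{polo3} in the generality in which it is stated and used.
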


This lemma is well-known and follows in a standard way from the  local 
representation \eqref{pontus} of $|v|$.  For instance, \eqref{polo1}  is reduced to the elementary estimate
$$
\int_{|x|<1, x_1>0}
\frac{x_1^s dx_1\cdots dx_{2N}}{\big(x_1+y_1+|x_2-y_2|+\sum_{j=3}^{2N} |x_j-y_j|^2\big)^{N+1+s+b}}\lesssim
\frac{1}{y_1^b}.
$$
Notice that the "worst case" in \eqref{polo3} is
when $z$ lies on $Z$. Therefore, it follows from \eqref{polo1} applied to $Z\cap D$.
See, e.g., \cite[V.3.3]{Range} for a detailed discussion of this kind of estimates.


\begin{remark}\label{reps0}
There is a somewhat different way to construct holomorphic extensions from $X$, which is,
e.g., used in \cite{Amar}.
Let $(E,f)$ be a Hermitian
resolution of $\Ok_D\J$ as before and let $\nabla_f=f-\dbar$, cf.~\cite{Aint2,AW1}.  The associated currents
$U$ and $R$ are related by the formula $\nabla_f U^0=I-R$, that is, 
$f_{k+1}U^0_{k+1}-\dbar U^0_k=I-R_k$,  $k=0,1,\ldots$.  
If $\phi\in\Ok(X\cap D)$,  then $R\phi$ is well-defined.
 By solving a sequence of $\dbar$-equations in $D$ one can find a current 
$V=V_1+V_2+\cdots +V_N$ in $D$ such that $f_{k+1}V_{k+1}-\dbar V_k= -R_k\phi$,  $k\ge 1$. 
We claim that $\Phi= f_1 V_1$ is a holomorphic extension of $\phi$.  Since one can solve
$\dbar$ with estimates one get estimates of $\Phi$.  In case $\kappa=1$ there is just one step in this procedure
so that if $K$ is a solution operator for $\dbar$ in $D$, then
$\Phi=f_1 K(R_1\phi)$ is a holomorphic extension of $\phi$.
However, except for the case when $X$ is reduced, 
we cannot see how to obtain Theorem~\ref{thmA} or \ref{thmB} with this approach.

Let us sketch a proof of the claim: Let $\varphi$ be any holomorphic extension of $\phi$ to $D$. Then
$$
\nabla_f U^0\varphi=(I-R)\varphi=\varphi -R\phi.
$$
Furthermore, $\nabla_f V= \Phi -R\phi$.  Hence $\nabla_f (V-U^0\varphi)=\varphi-\Phi$.  By solving
another sequence of $\dbar$-equations one can find a holomorphic $w$ such that  $\varphi-\Phi=f_1 w$.
This precisely means that $\varphi-\Phi$ is in $\J$.  
\end{remark}

\section{Proof of Theorem~\ref{thmA} in the general case}\label{ball2}
As described in Section~\ref{pokemon}, in the general case we get a similar function  
$v(\zeta,z)=\delta_{\zeta-z} q-\rho(\zeta)$
but instead of being holomorphic in $z$ and anti-holomorphic in $\zeta$ we have 
$\dbar_z v=0$ close to the diagonal $\Delta$ and the property \eqref{stork2}, respectively.
Notice, cf.~\eqref{sebra}, for future reference, that we
can choose $q$ so that $\dbar_zq=0$ close to $\Delta$. 
 
 \smallskip
In this section 
 we let the $\dbar$ in $\nabla_{\zeta-z}=\delta_{\zeta-z}-\dbar$ act on 
both  $z$ and $\zeta$.  Thus also anti-holomorphic differentials
with respect to $z$ will occur in 
 $g^\alpha$, cf.~\eqref{pommes}, and in 
\begin{equation}\label{tartar}
g:=(f_1(z)H^1U+HR)\w g^\alpha.
\end{equation}
However, we only have holomorphic differentials with respect to $\zeta$.
Then still $\nabla_{\zeta-z} g^{\alpha}=0$ and $\nabla_{\zeta-z} g=0$.

Let $\Omega$ be a \nbh of $\overline D$ and assume that 
$\phi$ is defined in $\Omega\cap X$. Moreover, let $\Psi$ be a holomorphic extension to $\Omega$.
Then 
$$
\Psi(z)= \int_\zeta  g_{N,N}^{0,0}\Psi, \quad z\in D, 
$$
where upper  and lower indices denote bidegree in $z$ and $\zeta$, respectively.
Hence (the $(0,0)$-component i $z$ of)
\begin{equation}\label{ponke}
\varphi(z):= \int_{\zeta\in D} HR\w g^\alpha(\zeta,z)\phi(\zeta)
\end{equation}
is a smooth function in $D$ that interpolates $\phi$ in the sense that $\varphi-\phi$ is in $\E^{0,0}\J$,
cf.~Section~\ref{poker}.

We shall now modify the kernel in \eqref{ponke} so that it produces a holomorphic extension.
To this end we
invoke a result that should be of independent interest. We formulate
and prove a somewhat more general version in Section~\ref{ball3}.

\begin{prop}\label{C}
Assume that $\widehat D\subset\subset \widetilde D$ are pseudoconvex neighborhoods of $\overline D$. 
There is a linear operator $T\colon \E^{0,1}(\widetilde D)\cap \ker\dbar \to \E^{0,0}(\widehat D)$
such that $\dbar T\xi=\xi$ in $\widehat D$ and furthermore 
$T\xi\in \E^{0,0}\J(\widehat D)$ if $\xi\in \E^{0,1}\J(\widetilde D)$.
\end{prop}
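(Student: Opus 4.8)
The plan is to realise $T$ as a $\dbar$-solution operator that is engineered so that membership in the ideal is inherited automatically. The two requirements are of different character: that $\dbar T\xi=\xi$ for \emph{every} closed $\xi$ is standard and can be arranged by $L^2$-methods on the relatively compact pseudoconvex set $\widehat D$, where the minimal solution orthogonal to $\Ok$ is linear and is smooth in the interior by elliptic regularity. The substantial point is the \emph{furthermore}-clause, namely that the operator carries $\E^{0,1}\J$ into $\E^{0,0}\J$. I would obtain this by descending through a free resolution \eqref{free} of $\Ok/\J$, using that the sheaf $\E$ of smooth functions is flat over $\Ok$: tensoring \eqref{free} by $\E$ (and by the free $\E$-modules $\Lambda^{0,q}$) keeps it exact, so at the level of smooth forms $\Ker f_k=\Image f_{k+1}$, and $\E^{0,q}\J=f_1\E^{0,q}(E_1)$.

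So assume first $\xi\in\E^{0,1}\J(\widetilde D)\cap\ker\dbar$. Since $\E^{0,1}\J=f_1\E^{0,1}(E_1)$ and the relevant kernel sheaves are fine, I can lift $\xi=f_1\xi_1$ globally and linearly with $\xi_1\in\E^{0,1}(E_1)$. From $\dbar\xi=0$ and exactness of the $\E$-complex one builds an obstruction tower: $f_1\dbar\xi_1=0$ forces $\dbar\xi_1=f_2\xi_2$, and inductively $\dbar\xi_k=f_{k+1}\xi_{k+1}$ with $\xi_k\in\E^{0,k}(E_k)$, the tower terminating at $k=\nu$ with $\dbar\xi_\nu=0$ because $E_{\nu+1}=0$. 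I then solve from the top down: choose $w_\nu$ with $\dbar w_\nu=\xi_\nu$; having $w_{k+1}$, the form $\xi_k-f_{k+1}w_{k+1}$ is $\dbar$-closed and I solve $\dbar w_k=\xi_k-f_{k+1}w_{k+1}$ with $w_k\in\E^{0,k-1}(E_k)$, each solution taken linearly as the minimal $L^2$-solution on $\widehat D$. Setting $u:=f_1w_1\in\E^{0,0}\J$ one gets $\dbar u=f_1\dbar w_1=f_1(\xi_1-f_2w_2)=f_1\xi_1=\xi$, since $f_1f_2=0$. This produces a linear $T_\J\colon\E^{0,1}\J\cap\ker\dbar\to\E^{0,0}\J$ with $\dbar T_\J=\mathrm{id}$.

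It remains to extend $T_\J$ to all closed $(0,1)$-forms while keeping $\dbar T=\mathrm{id}$. Fix any linear $\dbar$-solution operator $S$ on $\widehat D$. For $\xi\in\E^{0,1}\J\cap\ker\dbar$ the difference $T_\J\xi-S\xi$ is $\dbar$-closed, hence holomorphic, so $\xi\mapsto T_\J\xi-S\xi$ is a linear map into $\Ok(\widehat D)$ on that subspace. Choosing any linear projection $\pi$ of $\E^{0,1}(\widetilde D)\cap\ker\dbar$ onto the subspace $\E^{0,1}\J\cap\ker\dbar$ and setting $T\xi:=S\xi+(T_\J-S)(\pi\xi)$ gives the desired operator: the correction is holomorphic, so $\dbar T\xi=\xi$ for every closed $\xi$, while $T=T_\J$ on $\E^{0,1}\J$, so $T\xi\in\E^{0,0}\J$ there. (In the application of Section~\ref{ball2} only inputs in $\E^{0,1}\J$ occur, so this last gluing is needed only for the stated generality.) Most likely the paper instead realises $T$ through a single Koppelman kernel built from the weights and the currents $U,R$ of Section~\ref{kodiak}, in which the factor $f_1(z)$ in the weight \eqref{tartar} makes the $\J$-membership manifest; the descent above is the homological skeleton of that computation.

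The main obstacle is the descent itself: I must guarantee that the liftings $\xi=f_1\xi_1$ and $\dbar\xi_k=f_{k+1}\xi_{k+1}$, together with the $\dbar$-solutions $w_k$, can be chosen \emph{simultaneously smooth, global and linear} in $\xi$. Smooth global liftings through the $f_k$ are delicate precisely because the $f_k$ drop rank along $Z$ — indeed $f_1$ vanishes on $Z$, so there is no smooth bundle right inverse; here the flatness of $\E$ over $\Ok$ together with softness of the kernel sheaves is exactly what rescues the local linear lifts, which are then patched by a partition of unity. Solving $\dbar$ linearly is handled by the minimal solution, but one must check that the successive $w_k$ remain smooth throughout $\widehat D$, which is where the passage to a relatively compact pseudoconvex $\widehat D\subset\subset\widetilde D$ is used. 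I expect everything else to be routine bookkeeping.
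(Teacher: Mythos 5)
Your argument is correct as a proof of Proposition~\ref{C} as stated, but it follows a genuinely different route than the paper. The paper proves the more general Theorem~\ref{CC} by a single explicit Koppelman-type kernel, $T\psi=\int_\zeta (f_1(z)H^1U+HR)\w g\w B\w\psi$; for $\psi\in\E^{0,1}\J$ the term with $HR$ vanishes since $R\psi=0$, leaving $T\psi=f_1(z)b(z)$ --- but, and this is the entire difficulty there, $b$ need not be smooth (cf.\ $1=f\cdot(1/f)$), so the ideal membership is \emph{not} manifest from the factor $f_1(z)$, contrary to your closing guess. The paper instead shows $R_z(T\psi)=0$ by a regularization argument resting on the dimension principle for pseudomeromorphic currents, and then concludes via the characterization of $\E\J$ by residue annihilation, \cite[Theorem~5.1]{AW1}. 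Your homological descent avoids all of that machinery: Malgrange's theorem that $\E$ is flat over $\Ok$ makes the $\E$-valued complex exact, the staircase produces $u=f_1w_1$ with membership in $\E^{0,0}\J$ built in by construction, and the only analytic inputs are H\"ormander's theorem and interior ellipticity of the canonical solutions (which indeed holds in degrees $\ge 1$ because the minimal solution also satisfies $\dbar^{*}w_k=0$, so the pair $(\dbar,\dbar^{*})$ is elliptic). What each approach buys: yours is more elementary and makes the furthermore-clause transparent; the paper's operator is in addition \emph{continuous}, being integration against an explicit kernel. That extra regularity is not part of the statement, but it is quietly used in the application: in \eqref{hubert} the operator $T$ is interchanged with the integration in $\zeta$ against the currents $U,R$, an interchange that has no justification for an operator whose linearity comes from choice-based right inverses and the projection $\pi$, since such operators are not continuous. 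So your proof settles the Proposition literally as stated, where only linearity is demanded, but to make your $T$ serve in Section~\ref{ball2} you would need continuous linear liftings and a continuous projection --- a nontrivial Fr\'echet splitting problem --- or else the paper's kernel construction.
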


Here $\xi\in \E^{0,1}\J(\widetilde D)$ means that $\xi$ is a smooth $(0,1)$-form in $\widetilde D$ such 
that locally $\xi$ has a representation $\xi=\xi_1 \eta_1+\cdots +\xi_\nu\eta_\nu$, where $\xi_j$ are smooth
$(0,1)$-forms and $\nu_j$ are functions in $\J$.

\smallskip
Recall from Section~\ref{pokemon} that 
$\dbar_z q=0$ and $\dbar_z v=0$ in a set $W=\{|\zeta-z|<\epsilon\}$.  It follows from \eqref{mos2} that
there is a pseudoconvex \nbh $\widetilde D$ of $\overline D$ such that
$\dbar_z g^\alpha$ is smooth in $D_\zeta\times\widetilde D_z$.  
It follows that also
$\dbar_z g$ is smooth in $\widetilde D$ for $\zeta\in D$.
Since $\nabla_{\zeta-z}g=0$, the component $g_{N,N}$ of $g$ of total bidegree $(N,N)$ is $\dbar$-closed,
and hence  
\begin{equation}\label{tok}
\dbar_z g_{N,N}^{0,0}+\dbar_\zeta g^{0,1}_{N,N-1}=0
\end{equation}
in $D\times\widetilde D$.
Since $\dbar_z q=0$ in $W$,  no anti-holomorphic differentials 
with respect to $z$ can occur in $g^\alpha$, cf.~\eqref{pommes},  there,  and hence
$g^{0,1}_{N,N-1}=0$ in $W\cap D\times \widetilde D$.

\smallskip
Notice that $\dbar_z(HR\w g^\alpha)=HR\w \dbar_z g^\alpha$.
We  now define
\begin{equation}\label{adef}
\A(\zeta,z)= T  \big(H(\zeta,t)R(\zeta)\w \dbar_t  g^\alpha(\zeta,t)\big)(z), \quad \zeta\in D, \ z\in \widehat D.
\end{equation}
Then  clearly
$$
HR\w g^\alpha(\zeta,z)-\A(\zeta,z)
$$
is holomorphic in $z\in D$.  
Thus 
\begin{equation}\label{utsikt}
\Phi(z):=\int_{\zeta\in D}\big(HR\w g^\alpha(\zeta,z)-\A(\zeta,z)\big)\phi
\end{equation}
is holomorphic in $D$. We claim that $\Phi$ is indeed an extension of $\phi$.

\begin{proof}[Proof of the claim]
 As noticed above $g_{N,N-1}^{0,1}$ vanishes in $W$. Hence it is smooth in $D$ and vanishes to high order 
at the boundary. Since $\Psi$ is holomorphic thus
$$
\int_{\zeta \in D} \dbar_\zeta g_{N,N-1}^{0,1}\Psi=0
$$
by Stokes' theorem.
In view of \eqref{tok}, cf.~\eqref{tartar},  we therefore  have
\begin{equation}\label{pucko}
\int_{\zeta\in D}  HR\w \dbar_t g^\alpha \phi=-\int_{\zeta\in D}f_1(t)H^1U\w \dbar_t g^\alpha\Psi.
\end{equation}
Applying $T$ we get 
\begin{equation}\label{hubert}
\int_{\zeta\in D}\A(\zeta,z)\phi(\zeta)=
T\Big(\int_{\zeta\in D} HR\w \dbar_t g^\alpha\phi\Big)=-
T\Big(\int_{\zeta\in D} (f_1(t)H^1U\w \dbar_t g^\alpha\Psi).
\end{equation}
 In fact, the change of order of $T$ and integration
with respect to $\zeta\in D$ is legitimate since the currents $U$ and $R$, as well as $(-\rho(\zeta))^r $
go outside and what is left are forms depending on $t$ that are smooth in $\widetilde D$.
Since 
$$
\int_{\zeta\in D}f_1(t)H^1U\w \dbar_t g^\alpha\Psi 
$$
is  in $\E^{0,1}\J(\widetilde D)$ and
$\dbar_t$-closed,
it follows from Proposition~\ref{C} that 
$$
T\Big(\int_{\zeta\in D}f_1(t)H^1U\w \dbar_t g^\alpha\Psi\Big)
$$
is in
$\E^{0,0}\J(\widehat D)$ with respect to $z$.  We conclude that 
\eqref{hubert} is  in $\E^{0,0}\J(\widehat D)$.  Thus $\Phi-\phi$ is in $\E^{0,0}\J(D)$, and
since $\Phi$ is holomorphic, therefore $\Phi-\phi$ is in $\J$, see Lemma~\ref{polo}. Thus 
the claim is proved.
\end{proof}

Now the proof of Theorem~\ref{thmA}, that is, estimating the extension $\Phi$,
is concluded in the essentially same way as for the case with the ball in Section~\ref{ball}.
Since $\A$ has no singularities at the diagonal the second term in the definition
\eqref{utsikt} of $\Phi$ offers no problems at all. 
The first term is handled as in the proof for the ball. 
In fact, close to a point $\partial D\cap Z$  the same
arguments as before work. Each time a holomorphic derivative falls on $v$ we get
$\Ok(|\zeta-z|^\infty)$ which cancels the singularity in view of \eqref{mos2}.  
In a \nbh of a (possibly non-Cohen-Macaulay) point in $D\cap Z$ one proceed precisely as in the
the proof of Theorem~\ref{thmA} for the ball.

\begin{lma}\label{polo}
If $\Phi$ is holomorphic and in $\E^{0,0}\J$, then it is in $\J$.
\end{lma}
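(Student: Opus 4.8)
The statement to prove is Lemma~\ref{polo}: if $\Phi$ is holomorphic and belongs to $\E^{0,0}\J$, then $\Phi\in\J$.

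\medskip

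The plan is to exploit the duality principle for the residue current $R$ stated in Proposition~\ref{stekpanna}: a holomorphic $\Phi$ lies in $\J$ if and only if $R\Phi=0$. Since $\Phi$ is assumed holomorphic, it suffices to show that $R\Phi=0$. The hypothesis $\Phi\in\E^{0,0}\J$ means that locally $\Phi=\sum_k \Phi_k a_k$ where $\Phi_k$ are smooth functions and $a_k\in\J$. First I would check that the product $R\Phi$ makes sense and can be computed termwise: the currents $R_k$ are pseudomeromorphic with support on $Z$, and multiplication by the smooth factors $\Phi_k$ is unproblematic, so $R\Phi=\sum_k \Phi_k(a_k R)$. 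The key point is then that $a_k R=0$ for each $k$, because the ideal $\J$ annihilates $R$, as recalled in Section~\ref{kodiak} and reflected in the first statement of Proposition~\ref{stekpanna} (in the Cohen--Macaulay case the components of $R$ lie in $\Homs(\Ok_\Omega/\J,\CH_\Omega^Z)$, which is precisely the subsheaf annihilated by $\J$; in general the annihilation $\J R=0$ is the basic property from \cite{AW1}). Hence each term vanishes and $R\Phi=0$, whence $\Phi\in\J$.

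\medskip

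The one step that requires a little care — and which I expect to be the main technical obstacle — is justifying that multiplication of the current $R$ by the \emph{smooth} coefficients $\Phi_k$ commutes with the annihilation $a_k R=0$, i.e.\ that $\Phi_k(a_k R)=0$ even though $\Phi_k$ is merely smooth rather than holomorphic. This is legitimate because $a_k R$ is already the zero current, so multiplying by any smooth function keeps it zero; the subtlety would only arise if one tried to move $\Phi_k$ across $\dbar$, which is not needed here. Alternatively, one can phrase the argument via the second statement of Proposition~\ref{stekpanna} in the pure-dimensional case, testing against every $\mu\in\Homs(\Ok_\Omega/\J,\CH_\Omega^Z)$: since $a_k\mu=0$ for $a_k\in\J$, the same termwise computation gives $\Phi\mu=0$ for all such $\mu$, and purity of $\Ok_X$ then forces $\Phi\in\J$. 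Both routes reduce the lemma to the annihilation property of the residue/Coleff--Herrera currents, which is exactly what makes the duality in Proposition~\ref{stekpanna} applicable, and the statement follows.
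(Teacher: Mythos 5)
Your proof is correct and takes essentially the same route as the paper: the paper simply remarks that the lemma is an immediate consequence of the first part of Proposition~\ref{stekpanna}, and your argument --- writing $\Phi$ as a smooth combination of generators of $\J$, using that $\J$ annihilates $R$ to get $R\Phi=0$ termwise, and then invoking the duality for the holomorphic function $\Phi$ --- is precisely the intended elaboration of that remark. Your worry about the smooth coefficients is indeed a non-issue, exactly as you resolve it.
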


More explicitly, if $\eta_1,\ldots,\eta_\nu$ generate $\J$, $\Phi=a_1\eta_1+\cdots +a_\nu\eta_\nu$
for some smooth functions $a_j$ and $\Phi$ is holomorphic,  then one can choose
holomorphic such $a_j$.   

This  lemma should be well-known and it is 
an immediate consequence of  the first part of Proposition~\ref{stekpanna}.



\section{The $\dbar$-equation for forms in $\E\J$}\label{ball3}
 
In this section  $\J$ is a quite arbitrary ideal sheaf in a pseudoconvex domain
$\Omega\subset \C^N$.

\begin{thm}\label{CC}
Let $\J$ be an ideal sheaf in a pseudoconvex domain $\Omega\subset\C^N$, assume that
its zero set $Z$ has codimension $\kappa>0$, and let
 $\Omega'\subset\subset\Omega$. There is a linear operator $T\colon \E^{0,1}(\Omega)\cap\Ker\dbar\to \E^{0,0}(\Omega')$,  
such that $\dbar T\xi=\xi$ in $\Omega'$ and furthermore $T\xi\in \E^{0,0}\J(\Omega')$ if
 $\xi\in \E^{0,1}\J(\Omega)$.
\end{thm}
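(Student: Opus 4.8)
The plan is to construct the solution operator $T$ in two stages: first an ordinary $\dbar$-solution operator ignoring the ideal structure, then a correction that forces the solution into $\E\J$ while preserving $\dbar$-closedness of the data. To begin, since $\Omega$ is pseudoconvex and $\Omega'\subset\subset\Omega$, I would fix a smooth weight $g=g(\zeta,z)$ with compact support in $\Omega$, depending holomorphically on $z\in\overline{\Omega'}$, as provided in Section~\ref{kodiak2}. By the calculus of weights (the relation $\nabla_{\zeta-z}g=0$ together with the representation \eqref{bas}), the interior component of such a weight yields a linear integral operator $K\colon\E^{0,1}(\Omega)\cap\Ker\dbar\to\E^{0,0}(\Omega')$ with $\dbar K\xi=\xi$ on $\Omega'$. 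This is the classical homotopy operator and requires no properties of $\J$.

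Next I would use the residue currents of a Hermitian resolution $(E,f)$ of $\Ok/\J$ to manufacture the correction. The governing identity is $\nabla_f U=I-R$, i.e. $f_{k+1}U^0_{k+1}-\dbar U^0_k=I-R_k$, so that on holomorphic (or $\dbar$-closed, $\E\J$-valued) data the current $R$ measures the obstruction to membership in $\J$. The key point is that if $\xi\in\E^{0,1}\J(\Omega)$ is $\dbar$-closed, then $R\xi=0$ by Proposition~\ref{stekpanna} (the duality principle extends from holomorphic functions to $\dbar$-closed forms with values in the sheaf because $\J$ annihilates $R$ and $\dbar$ commutes through). Granting $R\xi=0$, the identity $\nabla_f U=I-R$ lets me solve a finite chain of $\dbar$-equations, producing a current $V=V_1+\cdots$ with $f_1V_1$ a genuine element of $\J$ agreeing with the naive solution modulo $\dbar$-exact and $\J$-valued terms, exactly as sketched in Remark~\ref{reps0}. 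Combining $K$ with this correction built from $U$ and $R$, and composing the auxiliary $\dbar$-solutions again through weight operators so that everything stays linear in $\xi$, I obtain the desired $T$: it satisfies $\dbar T\xi=\xi$ because the correction is $\dbar$-closed, and $T\xi\in\E^{0,0}\J(\Omega')$ precisely when $R\xi=0$, which holds on $\E^{0,1}\J(\Omega)$.

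The proof that $T\xi$ lands in $\E^{0,0}\J(\Omega')$ rests on showing that the current produced from $U$ has smooth coefficients after the construction (so that the membership in $\J$ is witnessed by smooth, not merely distributional, multipliers) and that the ideal membership is genuine rather than formal. Here I would lean on the smoothness of $U$ off $Z$ and on the almost semi-meromorphic regularization \eqref{pjosk2}, together with Lemma~\ref{polo} and its underlying duality, to pass from a distributional decomposition $\Phi=\sum a_j\eta_j$ to a smooth one.

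The main obstacle, as I see it, is the linearity and smoothness of the correction term simultaneously: solving the intermediate $\dbar$-equations in the chain must be done by a fixed linear operator (again a weight/homotopy operator) so that $T$ is linear in $\xi$, while ensuring the outputs are smooth up to $\partial\Omega'$ and that the residue $R\xi$ truly vanishes for $\dbar$-closed $\xi\in\E^{0,1}\J$. Verifying $R\xi=0$ in the smooth (non-holomorphic) category is the subtle step, since Proposition~\ref{stekpanna} is stated for holomorphic $\Phi$; I expect the resolution is that $R$ is annihilated by $\J$ as a sheaf of currents, so $R(\xi_j\eta_j)=\xi_j R\eta_j=0$ termwise regardless of holomorphicity, which is exactly what the $\E\J$ hypothesis supplies.
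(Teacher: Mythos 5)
Your overall architecture is closer to the paper's than your second paragraph suggests: the paper's operator is the one--shot weighted Koppelman formula $T\xi=\int_{\zeta}(f_1(z)H^1U+HR)\w g\w B\w\xi$ (no separate correction chain in the style of Remark~\ref{reps0} is needed --- the holomorphic correction is automatically the second term of \eqref{gata}), and your observation that $R\xi=0$ for $\xi\in\E^{0,1}\J$, termwise because $\J$ annihilates $R$, is exactly how the paper kills the $HR$-term, leaving $T\xi=f_1(z)b(z)$ with $b(z)=\int H^1U\w g\w B\w\xi$. The genuine gap is at the point you correctly flag in your third paragraph but then resolve with tools that cannot do the job. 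From $T\xi=f_1(z)b(z)$ one cannot conclude $T\xi\in\E^{0,0}\J$, because $b$ is a priori only a current, not smooth; the paper stresses this with the example $1=f\cdot(1/f)$, which is ``divisible by $f$'' distributionally yet not in $\la f\ra$. None of your proposed remedies closes this: smoothness of $U$ off $Z$ says nothing on $Z$, where everything happens; the regularization \eqref{pjosk2} only serves to define products of currents; and Lemma~\ref{polo} runs in the opposite direction --- it upgrades a \emph{holomorphic} function with a \emph{smooth} ideal decomposition to a holomorphic decomposition, it does not upgrade a distributional decomposition to a smooth one.

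The paper's actual mechanism, absent from your proposal, has two ingredients. First, a lemma asserting that the smooth solution $u=T\xi$ satisfies $R_zu=0$. This is not formal: it requires showing $\lim_{\epsilon\to0}R_z^{\epsilon}\w H^1U\w B=R_z\w H^1U\w B$, which is proved with the dimension principle for pseudomeromorphic currents --- the discrepancy $W_k$ is supported on the diagonal intersected with $\Omega\times Z$ (codimension $N+\kappa$), its anti-holomorphic degree is controlled, and one runs a finite induction using the structure results $R_{\kappa+\ell+1}=\alpha_{\ell+1}R_{\kappa+\ell}$ valid outside Zariski-closed subsets of increasing codimension. Second, since $\dbar u=\xi$ has coefficients in $\E^{0,0}\J$, every derivative $\partial^{\alpha}u/\partial\bar z^{\alpha}$ with $|\alpha|\ge1$ also annihilates $R$, and then the characterization in \cite[Theorem~5.1]{AW1} --- a smooth function all of whose anti-holomorphic derivatives annihilate $R$ lies in $\E^{0,0}\J$ --- gives the conclusion. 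Without the $Ru=0$ lemma and this characterization (or a substitute for them), the assertion that your solution lands in $\E^{0,0}\J$ is unsupported, and that assertion is the heart of the theorem rather than a technical remainder.
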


\begin{proof}
In a possibly slightly smaller pseudoconvex domain, that we denote by $\Omega$ as well, we can choose a
Hermitian free resolution \eqref{free} of $\Ok_\Omega/\J$. Let $U$ and $R$ be the associated currents and let
$H$ be a Hefer morphism associated with \eqref{free}. Moreover, let 
$g$ be a smooth weight with respect to 
$z\in \Omega'$ with compact support in $\Omega$, cf.~Section~\ref{kodiak2}. We also assume that
$g$ depends holomorphically on $z$.
Furthermore,  let $B$ be the component of the full 
Bochner-Martinelli form, see \cite[Section~2]{Aint}, that only has holomorphic differentials with respect to $\zeta$.
It follows from \cite[Section~7.4]{Aint2}, see also \cite{AS,AL}, that if $v$ is a smooth $(0,0)$-form
in $\Omega$, then 
\begin{equation}\label{gata}
v(z)=\int_{\zeta\in\Omega} (f_1(z)H^1U+HR)\w g \w B \w\dbar v + \int_{\zeta\in\Omega} (f_1(z)H^1U+HR)\w g  v
\end{equation}
for $z\in \Omega'$.  In fact, one can choose regularizations $U^\epsilon$ and $R^\epsilon$
of $U$ and $R$, respectively, so that
$$
g^\epsilon=f_1(z)H^1U^\epsilon+HR^\epsilon
$$
are smooth weights, and then  
\begin{equation}\label{gata2}
v=\int_{\zeta\in\Omega} g^\epsilon \w g\w B\w \dbar v+ \int_{\zeta\in\Omega}  g^\epsilon \w g  v
\end{equation}
holds for $\epsilon>0$, see Remark~\ref{reps} and, e.g., \cite{AS}. 
Now 
$$ 
g^\epsilon\to g':=f_1(z)H^1U+HR
$$
 as currents when $\epsilon\to 0$.
Notice  that $g'\w B$ is a tensor product of currents and hence well-defined in $\Omega\times\Omega$,
and that $g^\epsilon\w B\to g'\w B$.  Thus \eqref{gata} follows from \eqref{gata2}.

\smallskip

Let $\psi$ be a $\dbar$-closed smooth $(0,1)$-form in $\Omega$ and 
let $v$ be a (smooth) solution to $\dbar v=\psi$ in $\Omega$.  Since the second term in \eqref{gata} is holomorphic,
it follows that 
\begin{equation}\label{anka1}
T\psi:=\int_{\zeta\in\Omega} (f_1(z)H^1U+HR)\w g \w B \w \psi
\end{equation}
is a solution to $\dbar u=\psi$ in $\Omega'$. Since two solutions only differ by a holomorphic function it is 
clear that $T\psi$ is smooth. This is also seen directly, noticing that
\begin{equation}\label{anka2}
T\psi= v-\int_{\zeta\in\Omega} (f_1(z)H^1U+HR)\w g  v.
\end{equation}

Now assume that, in addition, $\psi\in\E^{0,1}\J$.  Then $R\psi=0$ and thus 
$HR\w g \w B \w \psi$ vanishes since it is a tensor product  of $R\psi$ and
$B$ times smooth forms.
Therefore, cf.~\eqref{anka1},
 $$
u:=T\psi(z)=f_1(z)\int_{\zeta\in\Omega} H^1U\w g \w B\w \psi=:   f_1(z) b(z).  
$$
However, we do not know that $b$ is smooth; in fact it is  (probably) not in general, 
and hence we cannot conclude directly that $u\in\E^{0,0}\J$.  Notice for instance that $1=f(1/f)$ although  $1$ is not in $\la f \ra$.  
To prove that $u$ is indeed in $\E^{0,0}\J$ we first use the following lemma.

\begin{lma}
If $\psi\in\E^{0,1}\J$, $\dbar\psi=0$, then $R u=0$.
\end{lma}

Since $u$ is smooth, $Ru$ is well-defined.

\begin{proof}
Let $R_z$ denote $R$ depending on $z$. 
First notice that $R_z\w U$ is a well-defined current in $\Omega_\zeta\times\Omega_z$ since it is
a tensor product. Moreover, $B$ is an almost semi-meromorphic form
and therefore, cf.~\eqref{pjosk2},  
$$
R_z\w H^1U\w B:=\lim_{\epsilon\to 0} R_z\w H^1U\w B^\epsilon
$$
is a well-defined current, where $B^\epsilon=\chi(|\zeta-z|^2/\epsilon)B$. See also \cite{AS,AL,AW3}.

Since $u$ is smooth and $R_z^\epsilon\to R_z$ we have that $R_z^\epsilon u\to R_z u$.
Moreover,
$$
R_z^\epsilon u=\int_{\zeta\in\Omega} R^\epsilon_z\w f_1(z)H^1U\w B\w g \psi.
$$
We claim that
\begin{equation}\label{anka3}
W_k=\lim_{\epsilon\to 0} R^{\epsilon}_{z,k}\w H^1U\w B-R_{z,k}\w H^1U\w B=0, \quad k=0,1,\ldots.
\end{equation}

The proof of this claim relies on the fact that all currents involved are {\it pseudomeromorphic} and that such currents
fulfill the dimension principle: If $\mu$ is pseudomeromorphic, has bidegree $(*,\ell)$, and support on
a subvariety of codimension strictly larger than $\ell$, then $\mu$ must vanish. See \cite{AW2,AS}. 

\begin{proof}[Proof of the claim]
Since $R_{z,k}\w U$ is a tensor product, $R^\epsilon_{z,k}\w U\to R_{z,k}\w U$. Since 
$B$ is smooth outside the diagonal $\Delta$, therefore $W_k=0$ there. That is,
$W_k$ has support on $\Delta$. 
 
Recall that $H^1U$ is a sum of currents of bidegree $(*,*)$ in $\zeta$
so that $H^1U\w B$ is a sum of currents of bidegree at most $(N,N-1)$. 
Thus $W_k$ has bidegree at most $(N,N-1+k)$.
Since $R_k$ has support on $Z$ we have that
$W_k$ has support on $\Delta\cap \Omega\times Z$ which we can think of as
 $Z\subset \Delta\subset \Omega\times\Omega$, and hence it has 
 codimension $N+\kappa$ in $\Omega\times\Omega$.  By the dimension principle
 we conclude that $W_k=0$ if $k\le \kappa$. 

Next we use the
fact that outside a Zariski closed set $Z_{1}\subset Z$ with codimension at least $1$ in $Z$ 
there is a smooth form $\alpha_{1}$ such that $R_{\kappa+1}=\alpha_{1}R_\kappa$,
see, \cite{AW1}.  Outside $Z_1$ 
thus $W_{\kappa+1}=\alpha_{1}W_\kappa=0$.  Thus $W_{\kappa+1}$ has anti-holomorphic degree
at most $N-1+\kappa+1$ and  support on
$Z_{1}\subset \Delta\subset \Omega\times\Omega$.  Again by the dimension principle
it must vanish.
In general, there are Zariski closed sets  $Z_\ell\subset Z$ of codimension at least $\ell$ in
$Z$, and smooth forms $\alpha_\ell$ outside $Z_\ell\subset Z$
such that $R_{\kappa+\ell+1}=\alpha_{\ell+1} R_{\kappa+\ell}$ there. 
 The claim now follows by finite induction. 
\end{proof}

From the claim we conclude that
$$
R_z T\psi(z)=\int_{\zeta\in\Omega} R_z f_1(z) H^1U\w g \w B\w \psi 
=\lim_{\epsilon\to 0} \int_{\zeta\in\Omega} R_z f_1(z) H^1U\w g \w B^\epsilon \w \psi =0,
$$
where the last equality holds since $R_z f_1(z)=0$ and hence the tensor product (times smooth forms)
$R_z f_1(z) H^1U\w B^\epsilon\w \psi$ vanishes as well.
Thus the lemma is proved.
\end{proof}
 

We can now conclude the proof of Theorem~\ref{CC}.  
Since $\dbar u=\psi$, that is, 
$$
\partial u/\partial\bar z_j=\psi_j, \quad j=1 \ldots, N, 
$$
where each $\psi_j$ is in $\E^{0,0}\J$, we conclude that
$$
(\partial^\alpha u/\partial \bar z^\alpha u) R=0
$$
for all $\alpha\ge 0$.  It now follows from \cite[Theorem~5.1]{AW1}
that $u$ is in $\E^{0,0}\J$.  
\end{proof}

\begin{remark}\label{reps}
If $f$ is a holomorphic tuple that vanishes on $Z$ and $\chi(t)$ is as before then
one can take $U^\epsilon=\chi(|f|^2/\epsilon)U$ and then define $R^\epsilon$ so that
$\nabla_f U^{\epsilon,0}=I-R^\epsilon$. 
Notice that $R^\epsilon_k$ may be non-vanishing for all $k\ge 0$.
\end{remark}

\section{Proof of Theorem~\ref{thmB}}\label{ball4}
 
If  $\J=\la f^{M+1} \ra$, then we have the simple resolution
$$
0\to \Ok(E_1)\stackrel{f^{M+1}}{\to}\Ok(E_0)\to \Ok/\J\to 0,
$$
where $E_1$ and $E_0$ are trivial line bundles. 
Moreover,
$$
U=\frac{1}{f^{M+1}},   
\quad R=R_1=\dbar\frac{1}{f^{M+1}},
$$
and if $h$ is a 
holomorphic $(1,0)$-form in $\Omega$ for each $z\in\Omega$ such that $\delta_{\zeta-z} h=f-f(z)$, 
then  
$$
H=\sum_{k=0}^M f(\zeta)^{M-k} f(z)^k h
$$
is a Hefer form for $f^{M+1}$, that is,
$$
\delta_{\zeta-z} H=f(\zeta)^{M+1}-f(z)^{M+1}.
$$
Thus
$$
HR=H\dbar\frac{1}{f^{M+1}}=\sum_{k=0}^M f^k(z) h\w \dbar\frac{1}{f^{k+1}}.
$$
Let us first assume  that we are in the ball so that $v(\zeta,z)$ is holomorphic in $z$ and anti-holomorphic
in $\zeta$.  Then we get our extension 
$$
\Phi(z)= \int_{\zeta \in D\cap X} \sum_{k=0}^M f^k(z)\dbar\frac{1}{f^{k+1}} \w h\w g^\alpha \phi
$$
for a suitably large $\alpha$.  Arguing precisely as in Section~\ref{ball}, cf.~\eqref{taktik},  we see that
$$
\Phi(z)= \int_{\zeta\in D\cap Z} \sum_{k=0}^M f^k(z) \frac{(-\rho)^\alpha}{v^{\alpha+n+1}} \beta_k \sum_{|\beta|=k}\partial^\beta \phi,
$$
where $\beta_k$ are smooth forms. If $\zeta\in Z$, then $f(z)=f(z)-f(\zeta)=\Ok(|\zeta-z|)$ and
hence $|f(z)|\le \sqrt{|v|}$. Using the same estimates as in Section~\ref{ball} now Theorem~\ref{thmB}
follows in the case with the ball. Combining with the arguments in Section~\ref{ball2} the general case follows.

\begin{remark}
It is reasonable to believe that it is possible to get a similar  sharpening of Theorem~\ref{thmA},
for instance, if $Z$ has higher codimension and  $\J$ is a jet ideal $\J_Z^{M+1}$.
\end{remark}

\end{document}